\numberwithin{equation}{section}
\newtheorem{thm}{Theorem}[section]
\newtheorem{cor}[thm]{Corollary}
\newtheorem{lemma}[thm]{Lemma}
\newtheorem{Remark}[thm]{Remark}
\newtheorem{remark}[thm]{Remark}
\newtheorem{definition}[thm]{Definition}
\newenvironment{proof}{{\bf Proof.}}{\hfill$\square$\vskip.5cm}
\title{Tensor and Matrix Inversions with Applications}
\author{Michael Brazell\footnote{Department of Mechanical and Aeronautical Engineering, Clarkson University, Potsdam, New York 13699, USA.} \,   Na Li\footnote{Department of Mathematics, Clarkson University, Potsdam, NY 13699, USA.} \, Carmeliza Navasca${}^{\dag}$\footnote{Corresponding author: \{cnavasca@clarkson.edu\}} \, Christino Tamon\footnote{Department of Computer Science, Clarkson University, Potsdam, New York 13699, USA.}}
\date{\today}
\begin{document}

\maketitle

\begin{abstract}
\setcounter{section}{0}
Higher order tensor inversion is possible for even order. We have shown that a tensor group endowed with the Einstein (contracted) product is isomorphic to the general linear group of degree $n$.   With the isomorphic group structures, we derived new tensor decompositions which we have shown to be related to the well-known canonical polyadic decomposition and multilinear SVD. Moreover, within this group structure framework, multilinear systems are derived, specifically, for solving high dimensional PDEs and large discrete quantum models. We also address multilinear systems which do not fit the framework in the least-squares sense, that is, when the tensor has an odd number of  modes or when the tensor has distinct dimensions in each modes. With the notion of tensor inversion, multilinear systems are solvable. Numerically we solve multilinear systems using iterative techniques, namely biconjugate gradient and Jacobi methods in tensor format.% are the main computational techniques. We demonstrate the efficacy of these iterative methods through some motivating examples.

\vspace{8pt}
\noindent
{\small \bf Keywords:} tensor and matrix inversions, multilinear system, tensor decomposition, least-squares method, 
\vskip .2 cm
%\noindent
%{\small \bf MCS numbers: XXXXX, XXXXX, XXXXX}
\end{abstract}

\section{Introduction}
Tensor decompositions have been succesfully applied across many fields which include among others, chemometrics \cite{Pere}, signal processing \cite{Comon,DeLathauwer} and computer vison \cite{Vasilescu}. More recent applications are in large-scale PDEs through a reduced rank representation of operators with applications to quantum chemistry \cite{Khoromskij} and aerospace engineering \cite{Doostan}. Beylkin and Mohlenkamp \cite{BM1,BM2} used a technique called separated representation to obtain a low rank representation of multidimensional operators in quantum models; see \cite{BM1,BM2}. Hackbusch, Khoromskij and Tyrtyshnikov \cite{K1,K2} have solved multidimensional boundary and eigenvalue problems using a reduced low dimensional tensor-product space through separated representation and hierarchical Kronecker tensor from the underlying high spatial dimensions. See the survey papers \cite{DeLathauwer,Khoromskij,Kolda} and the references therein for more applications and tensor based methods. Extensive studies (e.g. \cite{LekHeng,LekHeng2,HOSVD,Kolda2}) have exposed many aspects of the differences between tensors and matrices despite that tensors are multidimensional generalizations of matrices.

In this paper, we continue to investigate the relationship between matrices and tensors. Here we address the questions: when is it possible to matricize (\emph{tensorize}) and apply matrix (tensor) based methods to high dimensional problems and data with inherent tensor (matrix) structure. Specifically, we address tensor inversion through group theoretic structures and by providing numerical methods for specific multilinear systems in quantum mechanical models and high-dimensional PDEs. Since the inversion of tensor impinges upon a tensor-tensor multiplication definition, the contracted product for tensor multiplication was chosen since it provides a natural setting for multilinear systems and high-dimensional eigenvalue problems considered here.  It is also an intrinsic extension of the matrix product rule.  Still other choices of multiplication rules could be considered as well for particular application in hand. For example, in the matrix case, there are the alternative multiplication of Strassen \cite{Strassen} which improves the computational complexity by using block structure format and the optimized matrix multiplication based on \emph{blocking} for improving cache performance by Demmel \cite{DemmelNotes}. In a recent work of Van Loan \cite{VanLoan}, the idea of blocking are extended to tensors.  Our choice of the standard canonical tensor-tensor multiplication provides a useful setting for algorithms for decompositions, inversions and multilinear iterative solvers.

Like tensors, multilinear systems are ubiquitous since they model many phenomena in engineering and sciences. In the field of continuum physics and engineering, isotropic and anisotropic elastic models \cite{LaiRubinKrempl} are multilinear systems. Multilinear systems are also prevalent in the numerical methods for solving partial differential equations (PDEs) in high dimensions, although most tensor based methods for PDEs require a reduction of the spatial dimensions and some applications of tensor decomposition techniques. Here we focus on the iterative methods for solving the Poisson problems in high dimension in a tensor format. Tensor representations are also common in large discrete quantum models like the discrete Schr\"odinger and Anderson models. The study of spectral theory of the Anderson model is a very active research topic. The Anderson model \cite{Anderson}, Anderson's celebrated and ultimately Nobel prize winning work is the archetype and most studied model for understanding the spectral and transport properties of an electron in a disordered medium. Yet there are still many open problems and conjectures for high dimensional $d \geq 3$ cases; see \cite{Dirk,Kirsch,Stolz} and the references therein. The Hamiltonian of the discrete Schr\"odinger and Anderson models are tensors with an even number of modes; they also satisfy the symmetries required in the tensor SVD we described. Moreover, computing the eigenvectors to check for localization properties not only demonstrate the efficacy of our algorithms, but it actually gives some validation and provide some insights to some of the conjectures \cite{Dirk,Kirsch,Stolz}. Recently, Bai et al. \cite{Bai} have solved some key questions in quantum statistical mechanics numerically. For instance, they have developed numerical linear algebra methods for the many-electrons Hubbard model and quantum Monte Carlo simulations. Numerical (multi)linear algebra techniques are increasingly becoming useful tools in understanding very complicated models and very difficult problems in quantum statistical mechanics.

The contribution of this paper is three-fold. First, we define the tensor group which provides the framework for formulating multilinear systems and tensor inversion. Second, we discuss tensor decompositions derived from the isomorphic group structure and relate them to the standard tensor decompositions, namely, canonical polyadic (CP) \cite{CarolChang,Harshman} and multilinear SVD decompositions \cite{Tucker1,Tucker2,Tucker3,HOSVD}. We have shown that the tensor decompositions from the isomorphic properties are special cases of the well-known CP and multilinear SVD with \emph{symmetries} while satisfying some conditions. Stegeman \cite{Stegeman1,Stegeman2} extended Kruskal's existence and uniqueness conditions for CP decomposition for cases with various forms of symmetries (i.e. existence of identical factors). These decompositions appear in many signal processing applications; e.g. see \cite{Comon} and the references therein. When the tensor has the same dimension in all modes, the tensor eigenvalue decomposition in Section 3 is the tensor eigendecomposition described by De Lathauwer et al. in \cite{DeLatCasCar} which is prevalent in signal processing applications, namely in, the blind identification of underdetermined mixtures problems. Last, we describe multilinear systems in PDEs and quantum models. We provide numerical methods for solving multilinear systems of PDEs and tensor eigenvalue decompositions for high dimensional eigenvalue problems. Multilinear systems which do not fit in the framework are addressed by providing pseudo-inversion methods.

%afforded by the Einstein product provide the tensor representation of high dimensional PDE problems and eigenvalue problems of quantum models.  
%We also address the mutlinear systems which does not fit the framework; i.e. the inversion of the tensor do not exists under the Einstein product.
%
%Numerically we solve multilinear systems arising from partial differential equations and eigenvalue problem in quantum mechanics. Iterative techniques, namely biconjugate gradient and Jacobi methods for tensors are the main computational techniques.
%
%There are still many open problems like the extended state conjecture \cite{Erdos}. Numerical multilinear algebra techniques can be very useful tools in understanding localization of the multidimensional Anderson models at varying regimes as well as complicated behaviors in one-dimensional models. 
%
%Recently, Bai et al. \cite{Bai} have solved some key questions in quantum statistical mechanics numerically. For instance, they have developed numerical linear algebra methods for the many-electrons Hubbard model and quantum Monte Carlo simulations.
%
%Numerical multilinear algebra techniques can be very useful tools in understanding localization of the multidimensional Anderson models at varying regimes as well as complicated behaviors in one-dimensional models. 
%

\section{Preliminaries}

We denote the scalars in $\mathbb{R}$ with lower-case letters $(a,b,\ldots)$ and the vectors with bold lower-case letters $(\bf{a},\bf{b},\ldots)$.  The matrices are written as bold upper-case letters $(\bf{A}, \bf{B},\ldots)$ and the symbol for tensors are calligraphic letters $(\mathcal{A},\mathcal{B},\ldots)$. The subscripts represent the following scalars:  $\mathcal{(A)}_{ijk}=a_{ijk}$, $(\bold{A})_{ij}=a_{ij}$, $(\bold{a})_i=a_i$. The superscripts indicate the length of the vector or the size of the matrices. For example, $\bold{b}^{K}$ is a vector with length $K$ and $\bold{B}^{N \times K}$ is a $N \times K$ matrix. In addition, the lower-case superscripts on a matrix indicate the mode in which has been matricized. 

The order of a tensor refers to the cardinality of the index set.  A matrix is a second-order tensor and a vector is a first-order tensor.

\begin{definition}[even and odd tensors]
Given an $N$th tensor $\mathcal{T} \in \mathbb{R}^{I_1 \times I_2 \times \ldots \times I_N}$.
If $N$ is even (odd), then $\mathcal{T}$ is an even (odd) $N$th order tensor.
\end{definition}

\begin{definition}[Einstein product \cite{Einstein}]
For any $N$, the Einstein product  is defined by the operation $\ast_N$ via
\begin{eqnarray}\label{eins2}
(\mathcal{A} \ast_N \mathcal{B})_{i_1\ldots i_N k_{N+1} \ldots k_M} = \sum_{k_1\ldots  k_N} a_{i_1i_2 \ldots i_N k_1 \ldots k_N} b_{k_1 \ldots k_N k_{N+1} k_{N+2} \ldots k_M}. 
\end{eqnarray}
where $\mathcal{A} \in  \mathbb{T}_{I_1,\ldots,I_N,K_1,\ldots,K_N}(\mathbb{R})$ and $\mathcal{B} \in  \mathbb{T}_{K_1,\ldots,K_N,K_{N+1},\ldots,K_M}(\mathbb{R})$.
\end{definition}

For example, if $\mathcal{T}, \mathcal{S} \in \mathbb{R}^{I \times J \times I \times J}$, the operation $\ast_2$ is defined by the following:
\begin{eqnarray}\label{eins1}
(\mathcal{T} \ast_2 \mathcal{S})_{i j\hat{i}\hat{j}} = \sum_{u=1}^{I}\sum_{v=1}^{J} t_{i j  u v} s_{u v \hat{i}\hat{j}}.
\end{eqnarray}

The Einstein product is a contracted product that it is widely used in the area of continuum mechanics \cite{LaiRubinKrempl} and ubiquitously appears in the study of the theory of relativity \cite{Einstein}.  
Notice that the Einstein product $\ast_1$ is the usual matrix multiplication since
\begin{eqnarray}\label{einsmat}
({\bf{M}} \ast_1 {\bf{N}})_{ij} = \sum_{k=1}^{K} m_{i k } n_{k j} = ({\bf{M}} {\bf{N}})_{ij} 
\end{eqnarray}
for ${\bf{M}} \in \mathbb{R}^{I \times K}, {\bf{N}} \in \mathbb{R}^{K \times J}$. 

\begin{definition}[Tucker mode-$n$ product]
Given a tensor $\mathcal{T} \in \mathbb{R}^{I \times J \times K}$ and the matrices $\bold{A} \in \mathbb{R}^{\hat{I} \times I}$, $\bold{B} \in \mathbb{R}^{\hat{J} \times J}$ and $\bold{C}\in \mathbb{R}^{\hat{K} \times K}$, then the Tucker mode-$n$ products are the following:
{\small \begin{eqnarray*}
(\mathcal{T} \bullet_1 \bold{A})_{\hat{i},j,k} &=& \sum_{i=1}^{I} t_{ijk}a_{\hat{i}i},~\forall \hat{i},j,k~\hspace{.15cm}\mbox{(mode-1 product)}\\
(\mathcal{T} \bullet_2 \bold{B})_{\hat{j},i,k} &=& \sum_{j=1}^{J} t_{ijk}b_{\hat{j}j},~\forall \hat{j},i,k~\hspace{.15cm}\mbox{(mode-2 product)}\\
(\mathcal{T} \bullet_3 \bold{C})_{\hat{k},i,j} &=& \sum_{k=1}^{K} t_{ijk}c_{\hat{k}k},~\forall \hat{k},i,j~\hspace{.15cm}\mbox{(mode-3 product)}
\end{eqnarray*}}
\end{definition}
Notice that the Tucker product $\bullet_n$ is the Einstein product $\ast_1$ in which the mode summation is specified.

\begin{figure}[h]
\begin{center}
		{\label{matunfolding}}\includegraphics[width = 90 mm]{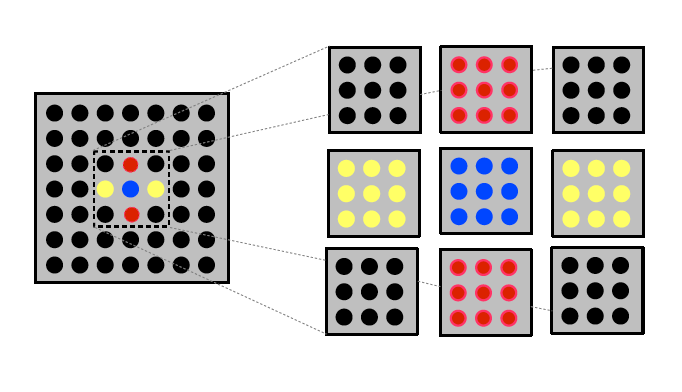}
		%\label{matunfolding}
	\caption{Matrix representation of $\mathcal{S} \in \mathbb{R}^{I_1 \times I_2 \times I_3 \times I_4}$ where $I_1=3, I_2=3, I_3=7, I_4=7$ with $3 \times 3$ matrix slices. There are $7 \cdot 7$ total $3 \times 3$ matrix slices. Here are nine matrix slices with the indices fixed at $(i_3,i_4)$:  $\bold{S}^{(3,4)}_{i_3=3,i_4=3}, \bold{S}^{(3,4)}_{i_3=3,i_4=4}, \bold{S}^{(3,4)}_{i_3=3,i_4=5}$ (top row, right), $\bold{S}^{(3,4)}_{i_3=4,i_4=3}, \bold{S}^{(3,4)}_{i_3=4i_4=4}, \bold{S}^{(3,4)}_{i_3=4,i_4=5}$ (middle row, right), $\bold{S}^{(3,4)}_{i_3=5,i_4=3}, \bold{S}^{(3,4)}_{i_3=5,i_4=4}, \bold{S}^{(3,4)}_{i_3=5,i_4=5}$ (bottom row, right)}
	\end{center}
\end{figure} 

The definitions below describe the representation of higher-order tensors into matrices.

\begin{definition}[Matrix and subtensor slices]\label{matricize1}
A third-order tensor $\mathcal{S} \in \mathbb{R}^{I \times J \times K}$ has three types of matrix slices obtained by fixing the index of one of the modes.  The matrix slices of  $\mathcal{S} \in \mathbb{R}^{I \times J \times K}$ are the following: $\bold{S}^{1}_{i={\alpha}} \in \mathbb{R}^{J \times K}$ with fixed $i={\alpha}$,  $\bold{S}^{2}_{j={\alpha}} \in \mathbb{R}^{I \times K}$ with fixed $j={\alpha}$ and  $\bold{S}^{3}_{k={\alpha}} \in \mathbb{R}^{I \times J}$ with fixed $k={\alpha}$. For a Nth-order tensor $\mathcal{S} \in \mathbb{R}^{I_1 \times I_2 \times I_3 \times \ldots \times I_N}$, the subtensors are the $(N-1)$th-order tensors denoted by $\mathcal{S}^{n}_{i_n=\alpha} \in \mathbb{R}^{I_1 \times I_2 \times I_3 \times \ldots \times  I_{n-1} \times I_{n+1} \times \ldots \times I_N}$ which are obtained by fixing the index of the $n$th mode. 
\end{definition}

\begin{definition}[Matrix slices with several indices fixed]\label{matricize2}
A fourth-order $\mathcal{S} \in \mathbb{R}^{I_1 \times I_2 \times I_3 \times I_4}$ has six types of matrix slices by fixing two indices.  A matrix slice of $\mathcal{S} \in \mathbb{R}^{I_1 \times I_2 \times I_3 \times I_4}$ is  $\bold{S}^{(3,4)}_{i_3={\alpha},i_4={\beta}} \in \mathbb{R}^{I_1 \times I_2}$ with $i_3=\alpha$ and $i_4=\beta$ fixed.  In general, for any $N$th order tensor $\mathcal{S} \in \mathbb{R}^{I_1 \times I_2 \times I_3 \times \ldots \times I_N}$, there are $\binom{N}{N-2}$ different matrix slices by holding $N-2$. A matrix slice of $\mathcal{S} \in \mathbb{R}^{I_1 \times I_2 \times I_3 \times \ldots \times I_N}$  is  $\bold{S}^{(3,4,\ldots,N)}_{i_3={\alpha_3},i_4={\alpha_4},\ldots,i_N={\alpha_N}} \in \mathbb{R}^{I_1 \times I_2}$ with indices $i_3,\ldots,i_n$ fixed. The subscripts $(3,4)$ and  $(3,4,\ldots,N)$ indicate which indices are fixed. Moreover, $(\mathcal{S})_{i_1i_2i_3i_4}=(\bold{S}^{(3,4)}_{i_3,i_4})_{i_1i_2}$. 
\end{definition} 
This definition is different from Definition \ref{matricize1} since several indices are fixed at a time; see Figure $1$. The matrix representation in Figure $1$ is consistent with the matrix representation in Matlab where the last $N-2$ indices are fixed.

\section{Tensor Group Structure and Decompositions}

For the sake of clarity, the main discussion is limited to fourth-order tensors, although all definitions and theorems hold for any even high-order tensors.  Here a group structure on a set of fourth order tensor through a push-forward map on the general linear group is defined.  Also several consequential results from the group structure will be discussed. 

%Recall some basic definitions.

\begin{definition}[Binary operation]
A binary operation $\star$ on a set $\mathbb{G}$ is a rule that assigns to each ordered pair $(\mathscr{A},\mathscr{B})$ of elements of $\mathbb{G}$ some element of $\mathbb{G}$.
\end{definition}

\begin{definition} \label{groupdef}
A group $(\mathbb{G}, \star)$ is a set $\mathbb{G}$, closed under a binary operation $\star$, such that the following axioms are satisfied:
\begin{itemize}
\item[$(A1)$]
The binary operation $\star$ is associative; i.e. $(\mathscr{A} \star \mathscr{B}) \star \mathscr{C} = \mathscr{A} \star (\mathscr{B} \star \mathscr{C})$ for $\mathscr{A}, \mathscr{B}, \mathscr{C} \in \mathbb{G}$.

\item[$(A2)$]
There is an element $\mathscr{E} \in \mathbb{G}$ such that $\mathscr{E} \star \mathscr{X} = \mathscr{X} \star \mathscr{E}$ for all $\mathscr{X} \in \mathbb{G}$.  This element $\mathscr{E}$ is an identity element for $\star$ on $\mathbb{G}$. 

\item[$(A3)$]
For each $\mathscr{A} \in \mathbb{G}$, there is an element $\widetilde{\mathscr{A}} \in \mathbb{G}$ with the property that 
$\widetilde{\mathscr{A}} \star \mathscr{A}= \mathscr{A} \star \widetilde{\mathscr{A}} = \mathscr{E}$.

\end{itemize}
\end{definition}

\begin{definition}[Transformation]
Let $\mathcal{A} \in \mathbb{T}_{I_1,I_2,I_1,I_2}(\mathbb{R})$ and  $\bold{A} \in \mathbb{M}_{I_1I_2,I_1I_2}(\mathbb{R})$. 
Then the transformation $f: \mathbb{T}_{I_1,I_2,I_1,I_2}(\mathbb{R}) \longrightarrow  \mathbb{M}_{I_1I_2,I_1I_2}(\mathbb{R})$ with $f(\mathcal{A})=\bold{A}$ is defined component-wise as
\begin{eqnarray}\label{transf1}
(\mathcal{A})_{i_1i_2i_1i_2} \xrightarrow[]{f} (\bold{A})_{[i_1+(i_2-1)I_1][i_1+(i_2-1)I_1]}
\end{eqnarray}
If $\mathcal{A} \in \mathbb{T}_{I_1,I_2,I_3,I_4}(\mathbb{R})$ and $\bold{A} \in \mathbb{M}_{I_1I_2,I_3I_4}(\mathbb{R})$, then
\begin{eqnarray}\label{transf2}
(\mathcal{A})_{i_1i_2i_3i_4}  \xrightarrow[]{f}  (\bold{A})_{[i_1+(i_2-1)I_1][i_3+(i_4-1)I_3]}.
\end{eqnarray}
Moreover, the transformation is
\begin{eqnarray}\label{transfN}
(\mathcal{A})_{i_1 i_2 \ldots  i_N  j_1 j_2 \ldots  j_N}\xrightarrow[]{f} (\bold{A})_{[i_1 + \sum_{k=2}^{N} (i_k -1) \prod_{l=1}^{k-1} I_l  ][j_1 + \sum_{k=2}^{N} (j_k -1) \prod_{l=1}^{k-1} J_l]}
\end{eqnarray}
when $\mathcal{A} \in \mathbb{T}_{I_1,\ldots,I_N,J_1,\ldots,J_N}(\mathbb{R})$ and $\bold{A} \in \mathbb{M}_{I_1\ldots I_N,J_1\ldots J_N}(\mathbb{R})$.
\end{definition}

These transformations which are known as column (row) major format in many computer languages are typically used to enhance efficiency in accessing arrays. This mapping (\ref{transf1}) is commonly used in matricization of fourth order tensors in signal processing applications; e.g. see \cite{DeLatCasCar}.

\begin{lemma}\label{lemmaf}
Let $f$ be the map defined in (\ref{transf1}).  Then the following properties hold:
\begin{enumerate}
\item
The map $f$ is a bijection. Moreover, there exists a bijective inverse map $f^{-1}: \mathbb{M}_{I_1I_2,I_1I_2}(\mathbb{R}) \rightarrow \mathbb{T}_{I_1,I_2,I_1,I_2}(\mathbb{R})$.
\item
The map satisfies $f(\mathcal{A} \ast_2 \mathcal{B})= f(\mathcal{A}) \cdot f(\mathcal{B})$ where '$\cdot$' refers to the usual matrix multiplication
\end{enumerate}
\end{lemma}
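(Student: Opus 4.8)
The plan is to verify the two claims directly from the explicit coordinate formula defining $f$ in (\ref{transf1}). For part (1), the key observation is that the map
\[
\phi: (i_1,i_2) \longmapsto i_1 + (i_2-1)I_1
\]
is a bijection from $\{1,\ldots,I_1\} \times \{1,\ldots,I_2\}$ onto $\{1,\ldots,I_1I_2\}$: this is just the standard mixed-radix (column-major) indexing, and its inverse is given by the Euclidean division of $p-1$ by $I_1$, i.e. $i_1 = ((p-1)\bmod I_1)+1$ and $i_2 = \lfloor (p-1)/I_1\rfloor + 1$. Since $f$ acts on a tensor $\mathcal{A}$ entrywise by relabelling the index pair $(i_1,i_2)$ (for the row) and $(i_3,i_4)$ — here equal to $(i_1,i_2)$ in the domain $\mathbb{T}_{I_1,I_2,I_1,I_2}$ — through $\phi$, it is a composition of a bijection on index sets with the identity on entries, hence a bijection on the spaces of arrays; the inverse $f^{-1}$ is obtained by applying $\phi^{-1}$ to the two matrix indices. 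I would state this via $\phi$ and keep the verification to one or two lines.

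For part (2), I would write both sides of $f(\mathcal{A} \ast_2 \mathcal{B}) = f(\mathcal{A})\cdot f(\mathcal{B})$ in coordinates and match. Fix output indices: on the matrix side, entry $(p,q)$ of $f(\mathcal{A})\cdot f(\mathcal{B})$ is $\sum_{r=1}^{I_1I_2} (f(\mathcal{A}))_{p r}(f(\mathcal{B}))_{r q}$. Using that every $r\in\{1,\ldots,I_1I_2\}$ is uniquely $r = \phi(u,v) = u + (v-1)I_1$ with $u\in\{1,\ldots,I_1\}$, $v\in\{1,\ldots,I_2\}$, I can rewrite this single sum over $r$ as the double sum $\sum_{u=1}^{I_1}\sum_{v=1}^{I_2}$, which is exactly the contraction in the definition (\ref{eins1}) of $\ast_2$. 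Writing $p = \phi(i_1,i_2)$ and $q = \phi(\hat i_1,\hat i_2)$, the summand becomes $(\mathcal{A})_{i_1 i_2 u v}(\mathcal{B})_{u v \hat i_1 \hat i_2}$, and the total is $(\mathcal{A}\ast_2\mathcal{B})_{i_1 i_2 \hat i_1 \hat i_2}$, which by definition of $f$ is precisely $(f(\mathcal{A}\ast_2\mathcal{B}))_{pq}$. Hence the matrices agree entrywise.

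The only mildly delicate point — and the step I would be most careful about — is bookkeeping consistency of which index pair maps to the row index and which to the column index across $\mathcal{A}$, $\mathcal{B}$, and their product, so that the contracted modes $(u,v)$ in $\mathcal{A}$ (a "column" pair) are matched against the contracted modes $(u,v)$ in $\mathcal{B}$ (a "row" pair) under the \emph{same} instance of $\phi$. Because $\phi$ does not depend on which tensor it is applied to, this matching is automatic, but it should be stated explicitly. No genuine obstacle arises; the proof is a direct unwinding of definitions, and the general-$N$ version follows verbatim using the map $(i_1,\ldots,i_N)\mapsto i_1 + \sum_{k=2}^N (i_k-1)\prod_{l=1}^{k-1} I_l$ from (\ref{transfN}) in place of $\phi$.
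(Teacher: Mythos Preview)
Your proposal is correct and follows essentially the same route as the paper: both arguments introduce the index bijection $\phi(i_1,i_2)=i_1+(i_2-1)I_1$ (the paper calls it $h$) to get part (1), and for part (2) both expand the matrix product entrywise, then use the bijection to convert the single sum over $r$ into the double sum over $(u,v)$ that defines $\ast_2$. Your version is a bit more explicit about the inverse of $\phi$ and the row/column bookkeeping, but the structure of the argument is identical.
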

\begin{proof}
\begin{enumerate}
\item[(1)]
According to the definition of $f$, we can define a map $h: I_1 \times I_2 \rightarrow I_{1}I_{2}$ by $h(i_1, i_2)=i_{1}+(i_{2} -1)I_1$ where $I_k=\{1,\ldots,I_k\}$ and $I_{k}I_{l}=\{1,\ldots,I_kI_l\}$. Clearly, the map $h$ is a bijection so it follows $f$ is a bijection. 
\item[(2)]
Since $f$ is a bijection, for some $1 \leq i,j \leq I_1 I_2$, there exists unique $i_1, i_2, j_1, j_2,$ for $1 \leq i_1, j_1 \leq I_1, 1 \leq i_2, j_2 \leq I_2$ such that $(i_2 -1)I_1 + i_1=i$, and $(j_2 -1)I_1 + j_1=j$. So,
 $$[f(\mathcal{A} \ast_2 \mathcal{B})]_{i j} = (\mathcal{A} \ast_2 \mathcal{B})_{i_1 i_2 j_1 j_2}= \sum_{u,v}a_{i_1 i_2 u v}b_{u v j_1 j_2}$$
 $$ [f(\mathcal{A}) \cdot f(\mathcal{B})]_{i j}= \sum_{r=1}^{I_1 I_2}[f(\mathcal{A})]_{i r}[f(\mathcal{B})]_{r j}.$$
 For every $1 \leq r \leq I_1 I_2$, there exists unique $u,v$ such that $(u-1)I_1+v=r$. So, 
 $$\sum_{u,v}a_{i_1 i_2 u v}b_{u v j_1 j_2}= \sum_{r=1}^{I_1 I_2}[f(\mathcal{A})]_{i r}[f(\mathcal{B})]_{r j}.$$
\end{enumerate}
\end{proof}

It follows from the properties of $f$ that the Einstein product (\ref{eins1}) can be defined through the transformation:
\begin{eqnarray}\label{productmap}
\mathcal{A} \ast_2 \mathcal{B}=f^{-1} [f(\mathcal{A} \ast_2 \mathcal{B})]= f^{-1}[f(\mathcal{A}) \cdot f(\mathcal{B})].
\end{eqnarray}
Consequently, the inverse map $f^{-1}$ satisfies 
\begin{eqnarray}\label{inversehomor}
f^{-1}(\bold{A}\cdot  \bold{B})= f^{-1}(\bold{A})  \ast_2 f^{-1}(\bold{B}).
\end{eqnarray}

Recall that a subset of $\mathbb{M}_{N,N}(\mathbb{R})$ consisting of all invertible $N \times N$ matrices with the matrix multiplication is a group. Let the subset $\mathbb{M} \subset  \mathbb{M}_{I_1I_2,I_1I_2}(\mathbb{R})$ contain all invertible  $I_1I_2 \times I_1I_2$. Define $\mathbb{T}=\{ \mathcal{T} \in  \mathbb{T}_{I_1,I_2,I_1,I_2}(\mathbb{R}): det(f(\mathcal{T})) \neq 0 \}$ where  $\bold{T} \in \mathbb{M}$ with $\bold{T}=f(\mathcal{T})$. 

\begin{thm} \label{c'estgroup}
Suppose $(\mathbb{M},\cdot)$ is a group. Let $f: \mathbb{T} \rightarrow \mathbb{M}$ be any bijection.  Then we can define a group structure on $\mathbb{T}$ by defining
\[  \mathcal{A} \ast_2 \mathcal{B} =f^{-1}[f(\mathcal{A}) \cdot f(\mathcal{B})] \]
for all $\mathcal{A}, \mathcal{B} \in \mathbb{T}$. In other words, the binary operation $\ast_2$ satisfies the group axioms. Moreover, the mapping $f$ is an isomorphism.
\end{thm}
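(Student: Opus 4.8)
The plan is to transport the group structure of $(\mathbb{M},\cdot)$ to $\mathbb{T}$ through the bijection $f$, verifying each group axiom in turn. This is a standard ``transport of structure'' argument: since $f$ is a bijection, every element of $\mathbb{T}$ is uniquely of the form $f^{-1}(\mathbf{X})$ for some $\mathbf{X}\in\mathbb{M}$, and we simply push the matrix group operations back along $f^{-1}$.

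First I would check closure: for $\mathcal{A},\mathcal{B}\in\mathbb{T}$, we have $f(\mathcal{A}),f(\mathcal{B})\in\mathbb{M}$, so $f(\mathcal{A})\cdot f(\mathcal{B})\in\mathbb{M}$ since $(\mathbb{M},\cdot)$ is a group, and hence $\mathcal{A}\ast_2\mathcal{B}=f^{-1}[f(\mathcal{A})\cdot f(\mathcal{B})]\in\mathbb{T}$. Next, for associativity (A1), I would compute $(\mathcal{A}\ast_2\mathcal{B})\ast_2\mathcal{C}=f^{-1}\!\big[f(f^{-1}[f(\mathcal{A})\cdot f(\mathcal{B})])\cdot f(\mathcal{C})\big]=f^{-1}[(f(\mathcal{A})\cdot f(\mathcal{B}))\cdot f(\mathcal{C})]$, using $f\circ f^{-1}=\mathrm{id}$; by associativity of matrix multiplication this equals $f^{-1}[f(\mathcal{A})\cdot(f(\mathcal{B})\cdot f(\mathcal{C}))]$, which unwinds symmetrically to $\mathcal{A}\ast_2(\mathcal{B}\ast_2\mathcal{C})$. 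For the identity (A2), let $\mathbf{E}\in\mathbb{M}$ be the identity matrix and set $\mathscr{E}:=f^{-1}(\mathbf{E})$; then for any $\mathcal{X}\in\mathbb{T}$, $\mathscr{E}\ast_2\mathcal{X}=f^{-1}[\mathbf{E}\cdot f(\mathcal{X})]=f^{-1}[f(\mathcal{X})]=\mathcal{X}$, and symmetrically on the other side. For inverses (A3), given $\mathcal{A}\in\mathbb{T}$, the matrix $f(\mathcal{A})$ is invertible in $\mathbb{M}$, so define $\widetilde{\mathscr{A}}:=f^{-1}\big([f(\mathcal{A})]^{-1}\big)\in\mathbb{T}$; then $\widetilde{\mathscr{A}}\ast_2\mathcal{A}=f^{-1}\big[[f(\mathcal{A})]^{-1}\cdot f(\mathcal{A})\big]=f^{-1}(\mathbf{E})=\mathscr{E}$, and likewise $\mathcal{A}\ast_2\widetilde{\mathscr{A}}=\mathscr{E}$.

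Finally, for the isomorphism claim, $f$ is already assumed to be a bijection, so it remains only to note that $f(\mathcal{A}\ast_2\mathcal{B})=f\big(f^{-1}[f(\mathcal{A})\cdot f(\mathcal{B})]\big)=f(\mathcal{A})\cdot f(\mathcal{B})$, which is exactly the homomorphism property; hence $f:(\mathbb{T},\ast_2)\to(\mathbb{M},\cdot)$ is a group isomorphism. I do not anticipate a serious obstacle here: the only subtlety worth flagging is well-definedness of $\ast_2$ as an operation on $\mathbb{T}$, i.e. that the output indeed lies back in $\mathbb{T}$ rather than merely in the ambient tensor space, which is precisely the closure step and follows because $\mathbb{T}$ is defined as the $f$-preimage of $\mathbb{M}$. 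One could also remark that this abstract theorem specializes, via Lemma \ref{lemmaf}, to the concrete Einstein-product structure since there $f$ is not just any bijection but one already satisfying $f(\mathcal{A}\ast_2\mathcal{B})=f(\mathcal{A})\cdot f(\mathcal{B})$, so the transported operation coincides with the original $\ast_2$.
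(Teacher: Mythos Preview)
Your proposal is correct and follows essentially the same transport-of-structure argument as the paper's proof in the Appendix: both verify associativity by reducing to associativity in $(\mathbb{M},\cdot)$, define the identity as $\mathscr{E}=f^{-1}(\mathbf{I})$, and define inverses as $\widetilde{\mathcal{A}}=f^{-1}([f(\mathcal{A})]^{-1})$. The only notable addition in the paper is that, after establishing $\mathscr{E}=f^{-1}(\mathbf{I})$ abstractly, it also gives the concrete entrywise description $(\mathscr{E})_{i_1 i_2 j_1 j_2}=\delta_{i_1 j_1}\delta_{i_2 j_2}$ and verifies directly via the Einstein-product summation that this tensor acts as a two-sided identity; this is extra information tied to the specific bijection of the paper rather than something required by the abstract statement you were asked to prove.
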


The proof is straightforward as it is in the matrix analogue. See the details of the proof in the Appendix. Moreover, the group structure can be cast a ring isomorphism. Further discussion or requirement of the ring structure is not needed hereafter so we do not include the proof. 

\begin{cor} \label{corforN}
Define $\widetilde{\mathbb{T}}=\{ \mathcal{T} \in  \mathbb{T}_{I_1,\ldots,I_N,J_1,\ldots,J_N}(\mathbb{R}), det(f(\mathcal{T})) \neq 0 \}$ with $I_m=J_m$ for $m=1,\ldots,N$.
Then the ordered pair $\widetilde{\mathbb{T}}$ is a group where the operation $\ast_N$ is the generalized Einstein product in (\ref{eins2}).
\end{cor}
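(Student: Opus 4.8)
The plan is to mimic the structure of Theorem~\ref{c'estgroup} and its companion Lemma~\ref{lemmaf}, but now for the general $N$th order map $f$ defined componentwise in~(\ref{transfN}). The key observation is that everything reduces to two facts: (i) $f:\widetilde{\mathbb{T}}\to\mathbb{M}_{I_1\cdots I_N,\,J_1\cdots J_N}(\mathbb{R})$ is a bijection, and (ii) $f$ intertwines the generalized Einstein product $\ast_N$ with ordinary matrix multiplication, i.e. $f(\mathcal{A}\ast_N\mathcal{B})=f(\mathcal{A})\cdot f(\mathcal{B})$. Once these are in hand, the corollary follows by invoking Theorem~\ref{c'estgroup} verbatim with $f$ in place of the fourth-order map: the set of invertible square matrices $\mathbb{M}\subset\mathbb{M}_{I_1\cdots I_N,I_1\cdots I_N}(\mathbb{R})$ (using $I_m=J_m$) is a group under $\cdot$, $f$ is a bijection onto it, and hence the transported operation $\ast_N$ makes $\widetilde{\mathbb{T}}$ a group isomorphic to $(\mathbb{M},\cdot)$.

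So the work is to re-prove Lemma~\ref{lemmaf} at level $N$. For~(i), I would introduce the multi-index flattening map $h:I_1\times\cdots\times I_N\to \{1,\ldots,I_1\cdots I_N\}$ given by $h(i_1,\ldots,i_N)=i_1+\sum_{k=2}^{N}(i_k-1)\prod_{l=1}^{k-1}I_l$, exactly the bracket appearing in~(\ref{transfN}), and check it is a bijection; this is the standard mixed-radix (column-major) numbering, and bijectivity is the uniqueness of the base-$(I_1,I_2,\ldots)$ representation of an integer in $\{1,\ldots,I_1\cdots I_N\}$. Since $f$ is just $h$ applied to the ``row'' multi-index $(i_1,\ldots,i_N)$ and $h$ applied to the ``column'' multi-index $(j_1,\ldots,j_N)$, bijectivity of $f$ on the full tensor/matrix spaces is immediate, and it restricts to a bijection between $\widetilde{\mathbb{T}}$ and $\mathbb{M}$ because membership in each set is defined by the same condition $\det(f(\mathcal{T}))\neq 0$.

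For~(ii), I would unwind both sides on a fixed pair of flattened indices $i=h(i_1,\ldots,i_N)$, $j=h(j_1,\ldots,j_N)$. The left side is $(\mathcal{A}\ast_N\mathcal{B})_{i_1\ldots i_N j_1\ldots j_N}=\sum_{k_1,\ldots,k_N}a_{i_1\ldots i_N k_1\ldots k_N}\,b_{k_1\ldots k_N j_1\ldots j_N}$ by~(\ref{eins2}); the right side is $\sum_{r}[f(\mathcal{A})]_{ir}[f(\mathcal{B})]_{rj}$. The point is that $r\mapsto(k_1,\ldots,k_N)$ via $r=h(k_1,\ldots,k_N)$ is a bijection between $\{1,\ldots,I_1\cdots I_N\}$ (here $=\{1,\ldots,J_1\cdots J_N\}$ since $I_m=J_m$) and the set of contraction multi-indices, so the two sums have identical terms and agree. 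This is the same computation as in the proof of Lemma~\ref{lemmaf}(2), just with $N$ summation indices instead of two, so I would present it compactly rather than grinding through it.

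The main (and only mild) obstacle is bookkeeping: being careful that the \emph{same} flattening convention $h$ is used for the row indices, the column indices, \emph{and} the internal contraction indices, and that the identification $I_m=J_m$ is what makes the contracted dimension $I_1\cdots I_N$ equal to $J_1\cdots J_N$ so that the matrices are square and composable in the required order. There is no substantive difficulty beyond this indexing care; in particular no new algebra is needed, since associativity, identity (the image of the identity matrix under $f^{-1}$), and inverses all come for free from the isomorphism, exactly as recorded in Theorem~\ref{c'estgroup}. I would therefore state the $N$-ary versions of Lemma~\ref{lemmaf}(1)--(2) as the two displayed steps, then conclude in one line by transport of structure.
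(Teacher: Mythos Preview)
Your proposal is correct and follows essentially the same approach as the paper: the paper's own proof of this corollary is a single sentence stating that the generalized transformation $f$ of~(\ref{transfN}) on $\widetilde{\mathbb{T}}$ with $\ast_N$ ``easily provide[s] the extension for this case.'' You have simply spelled out what that sentence means---verifying the $N$-ary analogue of Lemma~\ref{lemmaf} via the mixed-radix bijection and then transporting the group structure as in Theorem~\ref{c'estgroup}---which is exactly the intended argument, only made explicit.
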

\begin{proof}
The generalization of the transformation $f$ (\ref{transfN}) on the set $\widetilde{\mathbb{T}}$ with the binary operation $\ast_N$ easily provide the extension for this case.
\end{proof}

\begin{thm} \label{pasgroup}
The ordered pair $(\widehat{\mathbb{T}},\ast_{N})$ where $\widehat{\mathbb{T}}=\{ \mathcal{T} \in \mathbb{T}_{I_1, I_2 \ldots ,I_{2N-1}}   \}$ is not a group under the operation $\ast_{N}$.
\end{thm}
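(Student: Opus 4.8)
The plan is to prove this negative statement by exhibiting a group axiom that must fail, and the cleanest one is closure: I will show that $\ast_N$ is not a binary operation on $\widehat{\mathbb{T}}$ at all, so the closure clause of Definition~\ref{groupdef} already fails. First I would count modes. Reading $\ast_N$ on a $(2N-1)$-order tensor in the only way consistent with~(\ref{eins2}) — contract the last $N$ modes of the left factor against the first $N$ modes of the right factor, which moreover forces the dimension constraint $I_{N-1+k}=I_k$ for $k=1,\dots,N$ just so the product is defined at all — each factor contributes $(2N-1)-N=N-1$ surviving free modes, so $\mathcal{A}\ast_N\mathcal{B}$ has $2(N-1)=2N-2$ modes. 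Since $2N-2\neq 2N-1$ for every $N\geq 1$, the product $\mathcal{A}\ast_N\mathcal{B}$ never lies in $\widehat{\mathbb{T}}$; hence $\ast_N$ does not assign to each ordered pair of elements of $\widehat{\mathbb{T}}$ an element of $\widehat{\mathbb{T}}$, and the group structure of Definition~\ref{groupdef} cannot be realized.

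To make the conclusion robust I would then dispatch the two natural attempts to read around the order mismatch. (i) Restricting $\ast_N$ to shape-compatible chains still leaves no identity: an element $\mathscr{E}$ with $\mathscr{E}\ast_N\mathscr{X}=\mathscr{X}$ for $\mathscr{X}\in\widehat{\mathbb{T}}$ would force $\mathscr{X}$ and $\mathscr{E}\ast_N\mathscr{X}$ to share the same order, contradicting the order drop just computed; so $(A2)$, and with it $(A3)$, fails. (ii) One cannot instead import a group structure from $(\mathbb{M},\cdot)$ through a matricization in the style of Theorem~\ref{c'estgroup} and~(\ref{transfN}): any bijection onto matrices must split the index set $\{1,\dots,2N-1\}$ into two blocks whose sizes sum to the odd number $2N-1$, so the two blocks have unequal cardinality and, for generic $I_j$, the associated matrix is rectangular and not square; rectangular matrices have no two-sided multiplicative inverse, so the transferred operation cannot satisfy $(A3)$, and the hypothesis ``$(\mathbb{M},\cdot)$ is a group'' of Theorem~\ref{c'estgroup} is simply unavailable in odd order. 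Either way, $(\widehat{\mathbb{T}},\ast_N)$ is not a group.

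I expect the only real delicacy to be notational rather than mathematical: one must first fix, unambiguously, what $\ast_N$ is supposed to mean on an odd-order tensor — which $N$ of the $2N-1$ modes are contracted and which equalities among the $I_j$ this presupposes — and then observe that the conclusion (a strict change of order, hence loss of closure, hence loss of an identity) does not depend on that choice. Once that bookkeeping is in place the argument reduces to the mode-counting identity $(2N-1)+(2N-1)-2N=2N-2$ together with $2N-2\neq 2N-1$, so there is nothing to grind through.
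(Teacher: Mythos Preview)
Your proposal is correct and takes essentially the same approach as the paper: both argue that closure fails because $\ast_N$ applied to two $(2N-1)$-order tensors produces a tensor of strictly smaller order, so $\widehat{\mathbb{T}}$ is not closed. The paper only treats the case $N=2$ explicitly, whereas you carry out the mode count $(2N-1)+(2N-1)-2N=2N-2$ for general $N$ and append the supplementary remarks (i) and (ii); these extras are sound but go beyond what the paper uses to establish the theorem.
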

\begin{proof}
Take $N=2$. Then $\mathcal{T} \ast_2 \mathcal{S} \notin \widehat{\mathbb{T}}$ where $\mathcal{T}, \mathcal{S} \in \mathbb{T}_{I_1, I_2, I_3}$. It follows that $\widehat{\mathbb{T}}$ is not closed under $\ast_2$. Thus the ordered pair $(\widehat{\mathbb{T}},\ast_2)$ is not a group.
\end{proof}

Theorem (\ref{pasgroup}) implies that odd order tensors have no inverses with respect to the operation $\ast_N$, although such binary operation may exist in which the set of odd order tensors exhibits a group structure. Lemma (\ref{lemmaf}) and Theorem (\ref{c'estgroup}) show that the transformation $f$ is an isomorphism between groups $\mathbb{T}$ and $\mathbb{M}$.  From Corollary (\ref{corforN}), it follows that these structural properties are preserved for any ordered pair  $(\widetilde{\mathbb{T}},\ast_N)$ for any $N$. Thus in the following Section $3.2$, some properties and applications of the tensor group structure are addressed.   

Tensors $\mathcal{T} \in \mathbb{T}_{I_1,I_2,I_3,I_4}(\mathbb{R})$ with different mode lengths which are similar to rectangular matrices have no inverses under $\ast_2$. In Section 6, we discuss \emph{pseudo-inverses} for odd order tensors and even order tensors with distinct mode lengths.

\subsection{Decompositions via Isomorphic Group Structures}

Theorem \ref{c'estgroup} implies that $(\mathbb{T},\ast_2)$ is structurally similar to $(\mathbb{M},\cdot)$. Thus we endow  $(\mathbb{T},\ast_2)$ with the group structure such  $(\mathbb{T},\ast_2)$ and $(\mathbb{M},\cdot)$ are isomorphic as groups. This section discusses some of the definitions, theorems and decompositions preserved by the transformation.

\begin{definition}[Transpose] \label{tensortranspose}
The transpose of $\mathcal{S}  \in \mathbb{R}^{I \times J \times I \times J}$ is a tensor $\mathcal{T}$ which has entries $t_{ijkl}=s_{klij}$. We denote the transpose of $\mathcal{S}$ as $\mathcal{T}=\mathcal{S}^T$.  If $\mathcal{S}  \in \mathbb{R}^{I_1 \times I_2 \ldots \times I_N \times J_1 \times \ldots \times J_N}$, then
$(\mathcal{T})_{i_1,i_2,\ldots, i_n,j_1,j_2,\ldots,j_n}=(\mathcal{S})^T_{j_1,j_2,\ldots, j_n,i_1,i_2,\ldots,i_n}$ is the transpose of $\mathcal{S}$.
\end{definition}
 
\begin{definition}[Symmetric tensor]\label{symmetricT}
A tensor $\mathcal{S}  \in \mathbb{R}^{I_1 \times I_2 \ldots \times I_N \times J_1 \times \ldots \times J_N} $ is symmetric if $\mathcal{S}=\mathcal{S}^T$, that is, $s_{i_1,i_2,\ldots, i_n,j_1,j_2,\ldots,j_n}=s_{j_1,j_2,\ldots, j_n,i_1,i_2,\ldots,i_n}$.  
\end{definition}
 
\begin{definition}[Orthogonal tensor]\label{orthogonaltensor}
A tensor $\mathcal{U} \in \mathbb{R}^{I \times J \times I \times J}$ is orthogonal if $\mathcal{U}^T \ast_2 \mathcal{U}= \mathcal{I}$
where $\mathcal{I}$ is the identity tensor under the binary operation $\ast_2$.
\end{definition}

\begin{definition}[Identity tensor]
The identity tensor $\mathcal{E}$ is 
\begin{eqnarray*}
\mathscr{E}_{i_1 i_2 j_1 j_2}=\delta_{i_1 j_1}\delta_{i_2 j_2}
\end{eqnarray*}
where
\begin{eqnarray*}
\delta_{lk}=
\begin{cases}
$1,$ & \mbox{$l = k$} \\
$0,$ & \mbox{$l \neq k$}.
\end{cases}
\end{eqnarray*}
It generalizes to an $2N$th order identity tensor,
\begin{eqnarray}\label{identity}
(\mathcal{I})_{i_1 i_2 \ldots i_N j_1 j_2 \ldots j_N}=\prod_{k=1}^N\delta_{i_k j_k}.
\end{eqnarray}
\end{definition}
%where $\delta_{mn}$ is the Kronecker delta.

\begin{definition}[Diagonal tensor]\label{diagonal}
A tensor $\mathcal{D} \in \mathbb{R}^{I \times J \times I \times J}$ is called diagonal if $d_{ijkl}=0$ when $i \neq k$ and $j \neq l$.
\end{definition}

The diagonal tensor $\mathcal{D} \in \mathbb{R}^{I_1 \times \ldots \times I_N}$ in tensor decompositions like in Parallel Factorization and Canonical Decomposition\cite{CarolChang,Harshman} has nonzero entries $d_{i_1,i_2,\ldots, i_n}$ when $i_1= \ldots =i_N$. Definition \ref{diagonal} has in general more non-zero entries than the usual definition.  This definition is consistent with the identity tensor (\ref{identity}), that is, the diagonal and the identity tensors have nonzero entries on the same indices.

 \begin{thm}[Singular value decomposition (SVD)]\label{TSVD}
 Let $\mathcal{A} \in \mathbb{R}^{I \times J \times I \times J}$ with $R=rank(f(\mathcal{A}))$. The singular value decomposition for tensor $\mathcal{A}$ has the form
 \begin{eqnarray}\label{BLNTsvd}
 \mathcal{A}=\mathcal{U} \ast_2 \mathcal{D} \ast_2 \mathcal{V}^T 
 \end{eqnarray}
 where $\mathcal{U} \in \mathbb{R}^{I \times J \times I \times J}$ and $\mathcal{V} \in \mathbb{R}^{I \times J \times I \times J}$ are orthogonal tensors and $\mathcal{D} \in \mathbb{R}^{I \times J \times I \times J}$
is a diagonal tensor with entries $\sigma_{ijij}$ called \emph{singular values}. Moreover, the decomposition (\ref{BLNTsvd}) can be written as 
\begin{eqnarray} \label{BLNTsvdsum}
\mathcal{A}=\sum_{kl} \sum_{ij,\hat{i}\hat{j}}  \sigma_{klkl}  (\bold{U}_{kl}^{(3,4)})_{ij}  \circ  (\bold{V}_{kl}^{(3,4)})_{\hat{i}\hat{j}}, %(\mathcal{U}\ast_2 \bold{E_{ij}}) \circ (\mathcal{V} \ast_2 \bold{E_{ij}}) 
\end{eqnarray}
a sum of fourth order tensors. The matrices $\bold{U}_{ij}^{(3,4)}$ and $\bold{V}_{ij}^{(3,4)}$ are called left and right singular matrices.
\end{thm}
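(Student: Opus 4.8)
The plan is to transport the ordinary matrix singular value decomposition across the isomorphism $f$ supplied by Lemma~\ref{lemmaf} and Theorem~\ref{c'estgroup}. First I would form $\mathbf{A}=f(\mathcal{A})\in\mathbb{M}_{IJ,IJ}(\mathbb{R})$ and apply the classical SVD, writing $\mathbf{A}=\mathbf{U}\mathbf{D}\mathbf{V}^{T}$ with $\mathbf{U},\mathbf{V}$ orthogonal $IJ\times IJ$ matrices and $\mathbf{D}$ diagonal with nonnegative entries, of which exactly $R=rank(\mathbf{A})=rank(f(\mathcal{A}))$ are nonzero. Setting $\mathcal{U}=f^{-1}(\mathbf{U})$, $\mathcal{V}=f^{-1}(\mathbf{V})$, $\mathcal{D}=f^{-1}(\mathbf{D})$ in $\mathbb{R}^{I\times J\times I\times J}$ and applying the homomorphism identity (\ref{inversehomor}) twice, together with associativity of matrix multiplication, I obtain
\begin{equation*}
\mathcal{A}=f^{-1}(\mathbf{U}\mathbf{D}\mathbf{V}^{T})=f^{-1}(\mathbf{U})\ast_{2}f^{-1}(\mathbf{D})\ast_{2}f^{-1}(\mathbf{V}^{T}).
\end{equation*}
It then remains only to identify the three factors with the objects named in the statement.

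For this I would record three elementary facts about $f$, each a direct check with the flattening bijection $h(i_1,i_2)=i_1+(i_2-1)I$ from Lemma~\ref{lemmaf}. (i) \emph{$f$ intertwines transposition}: since $\mathcal{A}\in\mathbb{R}^{I\times J\times I\times J}$ is ``square'' ($I_1=I_3$, $I_2=I_4$), rows and columns of $f(\cdot)$ are flattened with the same rule $h$, so by Definition~\ref{tensortranspose} and formula (\ref{transf1}) one has $f(\mathcal{S}^{T})=f(\mathcal{S})^{T}$; hence $f^{-1}(\mathbf{V}^{T})=\big(f^{-1}(\mathbf{V})\big)^{T}=\mathcal{V}^{T}$. (ii) \emph{$f$ sends the identity tensor to the identity matrix}: immediate from (\ref{identity}); combined with (i) and Lemma~\ref{lemmaf}(2) this gives $\mathcal{U}^{T}\ast_{2}\mathcal{U}=f^{-1}(\mathbf{U}^{T}\mathbf{U})=f^{-1}(\mathbf{I})=\mathcal{I}$, and likewise for $\mathcal{V}$, so $\mathcal{U},\mathcal{V}$ are orthogonal tensors in the sense of Definition~\ref{orthogonaltensor}. (iii) \emph{$f^{-1}$ sends diagonal matrices to diagonal tensors}: an off-diagonal zero of $\mathbf{D}$ sits at position $(h(i_1,i_2),h(i_3,i_4))$ with $(i_1,i_2)\neq(i_3,i_4)$, so $\mathcal{D}=f^{-1}(\mathbf{D})$ satisfies $d_{i_1i_2i_3i_4}=0$ unless $i_1=i_3$ and $i_2=i_4$; thus $\mathcal{D}$ is diagonal in the sense of Definition~\ref{diagonal}, its only surviving entries being $\sigma_{ijij}:=d_{ijij}$, the singular values. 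Combining, $\mathcal{A}=\mathcal{U}\ast_{2}\mathcal{D}\ast_{2}\mathcal{V}^{T}$ with $\mathcal{U},\mathcal{V}$ orthogonal and $\mathcal{D}$ diagonal, which is (\ref{BLNTsvd}).

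Finally I would obtain the outer-product expansion (\ref{BLNTsvdsum}) by writing the Einstein products entrywise and using diagonality of $\mathcal{D}$: one has $(\mathcal{U}\ast_{2}\mathcal{D})_{ijkl}=\sum_{uv}u_{ijuv}d_{uvkl}=\sigma_{klkl}\,u_{ijkl}$, and then
\begin{equation*}
(\mathcal{A})_{ij\hat{i}\hat{j}}=\big(\mathcal{U}\ast_{2}\mathcal{D}\ast_{2}\mathcal{V}^{T}\big)_{ij\hat{i}\hat{j}}=\sum_{kl}\sigma_{klkl}\,u_{ijkl}\,(\mathcal{V}^{T})_{kl\hat{i}\hat{j}}=\sum_{kl}\sigma_{klkl}\,u_{ijkl}\,v_{\hat{i}\hat{j}kl}.
\end{equation*}
Recognizing $u_{ijkl}=(\mathbf{U}^{(3,4)}_{kl})_{ij}$ and $v_{\hat{i}\hat{j}kl}=(\mathbf{V}^{(3,4)}_{kl})_{\hat{i}\hat{j}}$ as entries of the mode-$(3,4)$ matrix slices (Definition~\ref{matricize2}), and reading the inner double sum over $i,j,\hat{i},\hat{j}$ as the outer product $\circ$, this is exactly the right-hand side of (\ref{BLNTsvdsum}). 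The only place that needs genuine care is the index bookkeeping in fact (i): one must verify that the flattening convention (\ref{transf1}) is precisely the one under which the tensor transpose of Definition~\ref{tensortranspose} corresponds to the matrix transpose — it is, thanks to the square structure — since with a mismatched convention the factors $\mathcal{U},\mathcal{V}$ would fail to be orthogonal; everything else is a mechanical rewriting of the matrix SVD.
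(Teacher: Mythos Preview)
Your proposal is correct and follows the same strategy as the paper: apply the matrix SVD to $f(\mathcal{A})$ and pull the factorization back through the isomorphism $f^{-1}$ using property~(\ref{inversehomor}). In fact your argument is more complete than the paper's own proof, which simply asserts the transfer $\mathbf{A}=\mathbf{U}\mathbf{D}\mathbf{V}^{T}\xrightarrow{f^{-1}}\mathcal{A}=\mathcal{U}\ast_{2}\mathcal{D}\ast_{2}\mathcal{V}^{T}$ without explicitly checking that $f$ intertwines transposition or that diagonality is preserved, and does not derive the outer-product form~(\ref{BLNTsvdsum}) at all; your facts (i)--(iii) and the entrywise computation at the end fill exactly those gaps.
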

The symbol $\circ$ denotes the outer product where $\mathcal{A}_{ijkl}=\bold{B}_{ij} \circ \bold{C}_{kl}=\bold{B}_{ij}\bold{C}_{kl}$. Recall from Definition 
\ref{matricize2} that $\bold{U}_{ij}^{(3,4)}$ and $\bold{V}_{ij}^{(3,4)}$ are matricizations of fourth order tensors $\mathcal{U}$ and $\mathcal{V}$, respectively.\\ 
\\
\begin{proof}
Let $\bold{A}=f(\mathcal{A})$. From the isomorphic property (\ref{inversehomor}) and Theorem \ref{c'estgroup}, we have 
$$
\bold{A}=\bold{U}\cdot \bold{D} \cdot \bold{V}^T    \xrightarrow[]{f^{-1}}   \mathcal{A}=\mathcal{U} \ast_2 \mathcal{D} \ast_2 \mathcal{V}^T 
$$
where $\bold{U}$ and $\bold{V}$ are orthogonal matrices and $\bold{D}$ is a diagonal matrix. In addition, $\bold{U}\cdot \bold{U}^T=\bold{I}$ and $\bold{V}\cdot \bold{V}^T=\bold{I}  \xrightarrow[]{f^{-1}} \mathcal{U}^T \ast_2 \mathcal{U}= \mathcal{I}$ and $\mathcal{V}^T \ast_2 \mathcal{V}= \mathcal{I}$.
\end{proof}

 \begin{thm}[Eigenvalue decomposition(EVD) for symmetric tensor]\label{TEVD}
Let $\bar{\mathcal{A}} \in \mathbb{R}^{I \times J \times I \times J}$ and $R=rank(f(\mathcal{A}))$. $\bar{\mathcal{A}}$ is a real symmetric tensor if and only if there is a real orthogonal matrix $\mathcal{P} \in  \mathbb{R}^{I \times J \times I \times J}$ and a real diagonal matrix $\bar{\mathcal{D}} \in  \mathbb{R}^{I \times J \times I \times J}$ such that
\begin{eqnarray}\label{eigdecomp} 
\bar{\mathcal{A}}=\mathcal{P} \ast_2 \bar{\mathcal{D}} \ast_2 \mathcal{P}^T
\end{eqnarray}
where $\mathcal{P} \in \mathbb{R}^{I \times J \times I \times J}$ is an orthogonal tensor and $\bar{\mathcal{D}} \in \mathbb{R}^{I \times J \times I \times J}$
is a diagonal tensor with entries $\bar{\sigma}_{ijij}$ called \emph{eigenvalues}. Moreover, the decomposition (\ref{eigdecomp}) can be written as 
\begin{eqnarray} \label{specdecompsum}
\bar{\mathcal{A}}=\sum_{kl} \sum_{ij\hat{i}\hat{j}} \bar{\sigma}_{klkl}  (\bold{P}_{kl}^{(3,4)})_{ij}  \circ  (\bold{P}_{kl}^{(3,4)})_{\hat{i}\hat{j}}, %(\mathcal{U}\ast_2 \bold{E_{ij}}) \circ (\mathcal{V} \ast_2 \bold{E_{ij}}) 
\end{eqnarray}
a sum of fourth order tensors. The matrix $\bold{P}_{kl}^{(3,4)} \in \mathbb{R}^{I \times J}$ is called an eigenmatrix.
\end{thm}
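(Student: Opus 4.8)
The plan is to transport the real spectral theorem for symmetric matrices through the isomorphism $f$ of Lemma \ref{lemmaf} and Theorem \ref{c'estgroup}, in exact parallel with the proof of Theorem \ref{TSVD}. The first preliminary step is to record that $f$ intertwines the tensor transpose of Definition \ref{tensortranspose} with the matrix transpose, i.e. $f(\mathcal{S}^{T}) = f(\mathcal{S})^{T}$ for every $\mathcal{S} \in \mathbb{R}^{I\times J\times I\times J}$. This is a direct unwinding of the index formula (\ref{transf1}): the entry $(\mathcal{S}^{T})_{i_1 i_2 i_3 i_4} = s_{i_3 i_4 i_1 i_2}$ sits in row $i_1+(i_2-1)I$ and column $i_3+(i_4-1)I$ of $f(\mathcal{S}^{T})$, which is precisely the row/column-swapped position of that value in $f(\mathcal{S})$. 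Hence $\bar{\mathcal{A}}$ is symmetric in the sense of Definition \ref{symmetricT} if and only if $\bold{A} := f(\bar{\mathcal{A}})$ is a symmetric matrix. I would likewise note that $f$ carries diagonal matrices to diagonal tensors: the single-index condition $r=s$ defining the matrix diagonal translates, via the bijection $h$ from the proof of Lemma \ref{lemmaf}, into the pair of conditions $i_1=i_3,\ i_2=i_4$, which is exactly the nonzero pattern of a diagonal tensor (the observation recorded after Definition \ref{diagonal}).

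For the forward implication, since $\bold{A} \in \mathbb{M}_{IJ,IJ}(\mathbb{R})$ is real symmetric, the classical spectral theorem gives $\bold{A} = \bold{P}\,\bold{D}\,\bold{P}^{T}$ with $\bold{P}$ orthogonal and $\bold{D}$ real diagonal. Applying $f^{-1}$ and using the homomorphism identity (\ref{inversehomor}) twice yields
$\bar{\mathcal{A}} = \mathcal{P} \ast_2 \bar{\mathcal{D}} \ast_2 \mathcal{P}^{T}$, where $\mathcal{P} = f^{-1}(\bold{P})$ and $\bar{\mathcal{D}} = f^{-1}(\bold{D})$; here I also use $f^{-1}(\bold{P}^{T}) = \mathcal{P}^{T}$ from the transpose compatibility above. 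Pulling back $\bold{P}\bold{P}^{T}=\bold{I}$ gives $\mathcal{P}^{T} \ast_2 \mathcal{P} = \mathcal{I}$, so $\mathcal{P}$ is orthogonal in the sense of Definition \ref{orthogonaltensor}, and $\bar{\mathcal{D}}$ is a diagonal tensor by the diagonal correspondence, with entries I label $\bar{\sigma}_{ijij}$. For the converse, I would observe that the tensor transpose is an anti-homomorphism for $\ast_2$, namely $(\mathcal{X}\ast_2\mathcal{Y})^{T} = \mathcal{Y}^{T}\ast_2\mathcal{X}^{T}$, which is immediate from $f(\mathcal{X}^{T})=f(\mathcal{X})^{T}$ and Lemma \ref{lemmaf}(2); then from (\ref{eigdecomp}) with $\bar{\mathcal{D}}^{T}=\bar{\mathcal{D}}$ one gets $\bar{\mathcal{A}}^{T} = \mathcal{P}\ast_2\bar{\mathcal{D}}^{T}\ast_2\mathcal{P}^{T} = \bar{\mathcal{A}}$, so $\bar{\mathcal{A}}$ is symmetric.

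Finally, to get the expanded form (\ref{specdecompsum}), I would compute $\mathcal{P}\ast_2\bar{\mathcal{D}}\ast_2\mathcal{P}^{T}$ componentwise using the definition (\ref{eins1}) of $\ast_2$: since $\bar{\mathcal{D}}$ has only the entries $\bar{\sigma}_{klkl}$, the two contractions collapse into a single sum over the pair $(k,l)$, and re-indexing the surviving factors of $\mathcal{P}$ and $\mathcal{P}^{T}$ as the matrix slices $\bold{P}^{(3,4)}_{kl}$ of Definition \ref{matricize2} produces the outer-product sum, in exact parallel with (\ref{BLNTsvdsum}). I expect the only genuinely delicate points to be the two index-bookkeeping lemmas in the first paragraph — that $f$ intertwines tensor transpose with matrix transpose and diagonal tensors with diagonal matrices — because everything after that is a formal transport of the matrix spectral theorem through the established isomorphism and requires no new ideas beyond those already used for Theorem \ref{TSVD}.
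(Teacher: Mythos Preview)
Your proposal is correct and follows essentially the same route as the paper: transport the matrix spectral theorem through the isomorphism $f$, exactly parallel to the SVD argument. You are in fact more careful than the paper about the bookkeeping (transpose and diagonal compatibility of $f$), and you handle the converse via the anti-homomorphism $(\mathcal{X}\ast_2\mathcal{Y})^{T}=\mathcal{Y}^{T}\ast_2\mathcal{X}^{T}$, whereas the paper instead checks directly that each rank-one summand $(\bold{P}_{kl}^{(3,4)})_{ij}\circ(\bold{P}_{kl}^{(3,4)})_{\hat{i}\hat{j}}$ is itself a symmetric tensor.
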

\begin{proof}
From the isomorphic property (\ref{inversehomor}) and Theorem \ref{c'estgroup}, we obtain that there exist some orthogonal matrix $\mathcal{P}$ and diagonal $\bar{\mathcal{D}}$ such that $\bar{\mathcal{A}}=\mathcal{P} \ast_2 \bar{\mathcal{D}} \ast_2 \mathcal{P}^T$. Moreover, the fourth order tensor $\hat{\mathcal{P}}_{ij\hat{i}\hat{j}}=(\bold{P}_{kl}^{(3,4)})_{ij}  \circ  (\bold{P}_{kl}^{(3,4)})_{\hat{i}\hat{j}}$ is symmetric since 
$\hat{\mathcal{P}}_{ij\hat{i}\hat{j}}=\sum_{ij,\hat{i}\hat{j}} (\bold{P}_{kl}^{(3,4)})_{ij}  \circ  (\bold{P}_{kl}^{(3,4)})_{\hat{i}\hat{j}}=\sum_{ij,\hat{i}\hat{j}} \mathcal{P}_{ijkl}\mathcal{P}_{\hat{i}\hat{j}kl}^T=\sum_{s}\bold{P}_{rs}\cdot \bold{P}_{\hat{r}s}^T=\sum_s \bold{P}_{\hat{r}s}\cdot \bold{P}_{rs}^T=\sum_{ij,\hat{i}\hat{j}} \mathcal{P}_{\hat{i}\hat{j}kl}\mathcal{P}_{ijkl}^T=(\bold{P}_{kl}^{(3,4)})_{\hat{i}\hat{j}} \circ (\bold{P}_{kl}^{(3,4)})_{ij}=\hat{\mathcal{P}}_{\hat{i}\hat{j}ij}.$
\end{proof}
\begin{remark}
If the eigenmatrix $(\bold{P}_{kl}^{(3,4)})$ is symmetric, that is, $(\bold{P}_{kl}^{(3,4)})_{ij}=(\bold{P}_{kl}^{(3,4)})_{ji}$, then the entries of $\bar{\mathcal{A}}$
has the following symmetry: $\bar{a}_{jilk}=\bar{a}_{ijkl}$. If $\bar{a}_{jilk}=\bar{a}_{ijkl}$ and $\bar{a}_{ijkl}=\bar{a}_{klij}$, then (\ref{specdecompsum}) is exactly the tensor eigendecomposition found in the paper of De Lathauwer et al. \cite{DeLatCasCar} when $I=J$. The fourth order tensor in  \cite{DeLatCasCar} is a quadricovariance in the blind identification of underdetermined mixtures problems.
\end{remark}

\subsection{Connections to Standard Tensor Decompositions}
 In 1927, Hitchcock \cite{Hitch1,Hitch2} introduced the idea that a tensor is decomposable into a sum of a finite number of rank-one tensors.  Today, we refer to this decomposition as canonical polyadic (CP) tensor decomposition (also known as CANDECOMP \cite{CarolChang} or PARAFAC \cite{Harshman}). CP is a linear combination of rank-one tensors, i.e.
\begin{eqnarray}\label{CP}
\mathcal{T}= \sum_{r=1}^R a_r \circ b_r \circ c_r \circ d_r
\end{eqnarray}
where $\mathcal{T}\in \mathbb{R}^{I \times J \times K \times L}$, $a_r  \in \mathbb{R}^{I}, b_r \in \mathbb{R}^{J}$, $c_r \in \mathbb{R}^{K}$ and $d_r \in \mathbb{R}^{L}$. The column vectors $a_r, b_r$, $c_r$ and $d_r$ form the so-called factor matrices $\bold{A}$, $\bold{B}$, $\bold{C}$ and $\bold{D}$, respectively.  The tensorial rank \cite{Hitch2} is the minimum $R \in \mathbb{N}$ such that $\mathcal{T}$ can be expressed as a sum of $R$ rank-one tensors. Moreover, in 1977 Kruskal \cite{Kruskal} proved that for third order tensor,
$$
2R +2 \leq rank(\bold{A}) + rank(\bold{B}) + rank(\bold{C})
$$
is the sufficient condition for uniqueness of $\mathcal{T}=\sum_{r=1}^R a_r \circ b_r \circ c_r$ up to permutation and scalings. Kruskal's uniqueness condition was then generalized for $n \geq 3$ by Sidiropoulous and Bro \cite{SidBro}:

\begin{eqnarray}\label{SidBroUnique}
2R + (n-1) \leq \sum_{j=1}^n rank(\bold{A}^{(j)})
\end{eqnarray}
for $\mathcal{T}= \sum_{r=1}^R a^{(1)}_r \circ \cdots \circ a^{(n)}_r$.

Another decomposition called Higher-Order SVD (also known as Tucker and Multilinear SVD) was introduced by Tucker \cite{Tucker1,Tucker2,Tucker3,HOSVD} in which a tensor is decomposable into a core tensor \emph{multiplied} by a matrix along each mode, i.e.
\begin{eqnarray}\label{MSVD}
\mathcal{T}= \mathcal{S} \bullet_1 \bold{A} \bullet_2 \bold{B} \bullet_3 \bold{C} \bullet_4 \bold{D}
\end{eqnarray}
where $\mathcal{T}, \mathcal{S} \in \mathbb{R}^{I \times J \times K \times L}$ are fourth order tensors with four orthogonal factors $\bold{A} \in \mathbb{R}^{I \times I}$, $\bold{B} \in \mathbb{R}^{J \times J}$, $\bold{C} \in \mathbb{R}^{K \times K}$and $\bold{D} \in \mathbb{R}^{L \times L}$. Note that CP can be viewed as a Tucker decomposition where its core tensor $\mathcal{S} \in \mathbb{R}^{I \times J \times K \times L}$ is \emph{diagonal}, that is, the nonzeros entries are located at $(\mathcal{S})_{iiii}$. 

The tensor SVD \ref{BLNTsvd} can be viewed as CP and multilinear SVD.

\begin{lemma}\label{CPSVD}
Let $\mathcal{T} \in \mathbb{R}^{I \times J \times I \times J}$ and $R=rank(f(\mathcal{T}))$. The tensor SVD (\ref{BLNTsvd}) in Theorem \ref{TSVD} is equivalent to CP (\ref{CP}) if there exist $\mathcal{A} \in \mathbb{R}^{I \times I \times J}, \mathcal{B} \in \mathbb{R}^{J \times I \times J} $, $\mathcal{C} \in \mathbb{R}^{I \times I \times J}$ and $ \mathcal{D} \in \mathbb{R}^{J \times I \times J} $ such that $a_{ikl}b_{jkl}=u_{ijkl}$ and $c_{\hat{i}kl}d_{\hat{j}kl}=v_{\hat{i}\hat{j}kl}$.
\end{lemma}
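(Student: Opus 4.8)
The plan is to unfold the tensor SVD $\mathcal{T}=\mathcal{U}\ast_2\mathcal{D}\ast_2\mathcal{V}^T$ of Theorem \ref{TSVD} into scalar components, substitute the hypothesized rank-one factorizations of the "middle" slices of $\mathcal{U}$ and $\mathcal{V}$, and then collapse the double summation over the diagonal index pair into a single summation, at which point the right-hand side is visibly a polyadic sum of the form (\ref{CP}).

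First I would compute the components of $\mathcal{U}\ast_2\mathcal{D}\ast_2\mathcal{V}^T$. Because $\mathcal{D}$ is diagonal in the sense of Definition \ref{diagonal}, the entry $d_{uvkl}$ vanishes unless $u=k$ and $v=l$, so contracting against $\mathcal{D}$ picks out a single term; combining this with the definition of $\ast_2$ in (\ref{eins1}) and the transpose convention of Definition \ref{tensortranspose} (so that $(\mathcal{V}^T)_{kl\hat i\hat j}=v_{\hat i\hat j kl}$) gives
\[ t_{ij\hat i\hat j}=\sum_{k,l}\sigma_{klkl}\,u_{ijkl}\,v_{\hat i\hat j kl}, \]
which is exactly the slice-wise expansion (\ref{BLNTsvdsum}) with $(\mathbf U^{(3,4)}_{kl})_{ij}=u_{ijkl}$ and $(\mathbf V^{(3,4)}_{kl})_{\hat i\hat j}=v_{\hat i\hat j kl}$. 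Inserting the hypotheses $u_{ijkl}=a_{ikl}b_{jkl}$ and $v_{\hat i\hat j kl}=c_{\hat i kl}d_{\hat j kl}$ then yields
\[ t_{ij\hat i\hat j}=\sum_{k,l}\sigma_{klkl}\,a_{ikl}\,b_{jkl}\,c_{\hat i kl}\,d_{\hat j kl}. \]

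Next I would re-index the pair $(k,l)$ by a single index $r=h(k,l)=k+(l-1)I$, the same bijection $h\colon I\times J\to IJ$ used in the proof of Lemma \ref{lemmaf}. Since the diagonal matrix $f(\mathcal{D})$ has precisely $R=rank(f(\mathcal{T}))$ nonzero diagonal entries, all reindexed terms with $r>R$ carry $\sigma_{klkl}=0$ and drop out, so the sum truncates to $r=1,\ldots,R$. Defining, for $r\leftrightarrow(k,l)$, the column vectors by $(a_r)_i=\sigma_{klkl}\,a_{ikl}$, $(b_r)_j=b_{jkl}$, $(c_r)_{\hat i}=c_{\hat i kl}$, $(d_r)_{\hat j}=d_{\hat j kl}$ — absorbing the scalar singular value into the first factor — I get
\[ t_{ij\hat i\hat j}=\sum_{r=1}^{R}(a_r)_i(b_r)_j(c_r)_{\hat i}(d_r)_{\hat j}, \]
i.e. $\mathcal{T}=\sum_{r=1}^{R}a_r\circ b_r\circ c_r\circ d_r$, which is the CP form (\ref{CP}) with $K=I$, $L=J$ and factor matrices $\mathbf A,\mathbf B,\mathbf C,\mathbf D$ having these vectors as columns. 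The reverse reading — that a polyadic sum whose factors assemble, via $h$, into orthogonal tensors $\mathcal{U},\mathcal{V}$ and a diagonal $\mathcal{D}$ reproduces the tensor SVD — follows by running the same identities backwards.

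This argument is essentially bookkeeping; the points that need care are (i) being explicit that contraction against a diagonal tensor retains only one term, so that the "middle" indices of $\mathcal{U}$, $\mathcal{D}$ and $\mathcal{V}^T$ are all identified with the pair $(k,l)$, and (ii) the truncation of the reindexed sum from $IJ$ terms down to $R$ terms, which relies on the singular values beyond the matrix rank being zero. I would also flag that the CP thus produced need not be the minimal (canonical) polyadic decomposition: the lemma only asserts that, under the stated rank-one condition on the slices of $\mathcal{U}$ and $\mathcal{V}$, the tensor $\mathcal{T}$ admits a representation of the form (\ref{CP}).
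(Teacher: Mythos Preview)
Your proposal is correct and follows essentially the same route as the paper: start from the slice-wise expansion (\ref{BLNTsvdsum}), substitute the rank-one hypotheses on the slices of $\mathcal{U}$ and $\mathcal{V}$, and reindex the pair $(k,l)$ by the single index $r=k+(l-1)I$, truncating at $R$ by the rank. The only cosmetic difference is that the paper keeps the singular value as a separate coefficient $\bar\sigma_{rr}$ in the polyadic sum, whereas you absorb it into the first factor; both are valid CP representations.
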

\begin{proof}
Define $r=k + (l-1)I$. Then
\begin{eqnarray*}
\mathcal{T}_{ij\hat{i}\hat{j}}&=&\sum_{kl} \sigma_{klkl}  (\bold{U}_{kl}^{(3,4)})_{ij}  \circ  (\bold{V}_{kl}^{(3,4)})_{\hat{i}\hat{j}}=\sum_{r=1}^{R}   \bar{\sigma}_{rr}  (\bold{U}_{r}^{(3,4)})_{ij}  \circ  (\bold{V}_{r}^{(3,4)})_{\hat{i}\hat{j}}\\
\end{eqnarray*}
where $\sigma_{klkl}=\hat{\sigma}_{rr}$. Since  $u_{ijkl}=(\bold{U}_{kl}^{(3,4)})_{ij}=(\bold{U}_{r}^{(3,4)})_{ij}=a_{ir}b_{jr}$ and  $v_{\hat{i}\hat{j}kl}=(\bold{V}_{kl}^{(3,4)})_{\hat{i}\hat{j}}=(\bold{V}_{r}^{(3,4)})_{\hat{i}\hat{j}}=c_{\hat{i}r}d_{\hat{j}r}$, it follows that
\begin{eqnarray*}
\mathcal{T}_{ij\hat{i}\hat{j}}&=&\sum_{r=1}^{R}  \hat{\sigma}_{rr}  (\bold{U}_{r}^{(3,4)})_{ij}  \circ  (\bold{V}_{r}^{(3,4)})_{\hat{i}\hat{j}}=\sum_{r=1}^{R}  \hat{\sigma}_{rr}  (\bold{U}_{r}^{(3,4)})_{ij}  \circ  (\bold{V}_{r}^{(3,4)})_{\hat{i}\hat{j}}=\sum_{r=1}^{R}   \hat{\sigma}_{rr} a_{ir}b_{jr}c_{\hat{i}r}d_{\hat{j}r}
\end{eqnarray*}
Then,
\begin{eqnarray}\label{CPSVD2}
\mathcal{T}&=&\sum_{r=1}^{R} \bar{\sigma}_{rr} a_{r} \circ b_{r} \circ c_{r} \circ d_{r}.
\end{eqnarray}
Moreover, the factor matrices $\bold{A} \in \mathbb{R}^{I \times IJ}, \bold{B} \in \mathbb{R}^{J \times IJ}, \bold{C} \in \mathbb{R}^{I \times IJ}$ and $\bold{D} \in \mathbb{R}^{J \times IJ}$ are built from concatenating the vectors $a_r$, $b_r$, $c_r$ and $d_r$, respectively.
\end{proof}

\begin{remark}
To satisfy existence and uniqueness of a CP decomposition, the inequality (\ref{SidBroUnique}) must hold; i.e. $2IJ + 3 \leq rank(\bold{A}) + rank(\bold{B}) + rank(\bold{C}) + rank(\bold{D})$. If $R=rank(f(\mathcal{A}))=IJ$, then the decomposition (\ref{CPSVD2}) does not satisfy (\ref{SidBroUnique}). However, if $f(\mathcal{T})$ is sufficiently low rank, that is, $R=rank(f(\mathcal{T})) < IJ$ for some dimensions $I$ and $J$, then (\ref{SidBroUnique}) holds. Futhermore, the existence of the factors $\bold{A}$, $\bold{B}$, $\bold{C}$ and $\bold{D}$ requires that the matricizations,$ \bold{U}_{kl}^{(3,4)}$  and $\bold{V}_{kl}^{(3,4)}$, to be rank-one matrices.%$\mathcal{A} \in \mathbb{R}^{I \times I \times J}, \mathcal{B} \in \mathbb{R}^{J \times I \times J} $, $\mathcal{C} \in \mathbb{R}^{I \times I \times J}$ and $ \mathcal{D} \in \mathbb{R}^{J \times I \times J} $ such that $\mathcal{U}=\mathcal{A} \ast_2 \mathcal{B}$ and $\mathcal{V}=\mathcal{C} \ast_2 \mathcal{D}$.
\end{remark}

\begin{lemma}\label{TSVDtoMSVD}
Let $\mathcal{T} \in \mathbb{R}^{I \times J \times I \times J}$ and $R=rank(f(\mathcal{A}))$. The tensor SVD (\ref{BLNTsvd}) in Theorem \ref{TSVD} is equivalent to multilinear SVD (\ref{MSVD}) if there exist $\bold{A} \in \mathbb{R}^{I \times I }, \bold{B} \in \mathbb{R}^{J \times J}$, $\bold{C} \in \mathbb{R}^{I \times I }$ and $\bold{D} \in \mathbb{R}^{J \times J} $ such that $a_{ik}b_{jl}=u_{ijkl}$ and $c_{ik}d_{jl}=v_{ijkl}$.
\end{lemma}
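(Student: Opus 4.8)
The plan is to turn the summation form (\ref{BLNTsvdsum}) of the tensor SVD directly into the multilinear form (\ref{MSVD}) by writing down an explicit core tensor and the four mode factors, and then to verify that those factors can be chosen orthogonal, as (\ref{MSVD}) demands.

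First I would read (\ref{BLNTsvdsum}) componentwise: since $(\bold{U}_{kl}^{(3,4)})_{ij}=u_{ijkl}$ and $(\bold{V}_{kl}^{(3,4)})_{\hat{i}\hat{j}}=v_{\hat{i}\hat{j}kl}$ by Definition \ref{matricize2}, it reads $t_{ij\hat{i}\hat{j}}=\sum_{k,l}\sigma_{klkl}\,u_{ijkl}\,v_{\hat{i}\hat{j}kl}$. Feeding in the hypotheses $u_{ijkl}=a_{ik}b_{jl}$ and $v_{\hat{i}\hat{j}kl}=c_{\hat{i}k}d_{\hat{j}l}$ gives
\begin{eqnarray*}
t_{ij\hat{i}\hat{j}}=\sum_{k,l}\sigma_{klkl}\,a_{ik}\,b_{jl}\,c_{\hat{i}k}\,d_{\hat{j}l}.
\end{eqnarray*}
I would then take the core tensor $\mathcal{S}\in\R^{I\times J\times I\times J}$ to be the diagonal tensor (in the sense of Definition \ref{diagonal}) with $s_{klkl}=\sigma_{klkl}$ and all other entries zero, and unroll the Tucker mode products using the definition of $\bullet_n$ to obtain
\begin{eqnarray*}
(\mathcal{S}\bullet_1\bold{A}\bullet_2\bold{B}\bullet_3\bold{C}\bullet_4\bold{D})_{ij\hat{i}\hat{j}}=\sum_{p,q,r,s}s_{pqrs}\,a_{ip}\,b_{jq}\,c_{\hat{i}r}\,d_{\hat{j}s}=\sum_{p,q}\sigma_{pqpq}\,a_{ip}\,b_{jq}\,c_{\hat{i}p}\,d_{\hat{j}q},
\end{eqnarray*}
which is exactly $t_{ij\hat{i}\hat{j}}$. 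Hence $\mathcal{T}=\mathcal{S}\bullet_1\bold{A}\bullet_2\bold{B}\bullet_3\bold{C}\bullet_4\bold{D}$, i.e. the form (\ref{MSVD}); moreover the core is diagonal, mirroring the fact noted earlier that CP is the Tucker decomposition with a diagonal core.

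It then remains to see that $\bold{A},\bold{B},\bold{C},\bold{D}$ can be taken orthogonal. Using the column-major indexing of (\ref{transf1}) with $I_1=I_3=I$ and $I_2=I_4=J$, the relation $u_{ijkl}=a_{ik}b_{jl}$ says precisely that $f(\mathcal{U})=\bold{B}\otimes\bold{A}$; and, as in the proof of Theorem \ref{TSVD}, orthogonality of $\mathcal{U}$ translates under $f$ to orthogonality of $f(\mathcal{U})$, so $(\bold{B}^{T}\bold{B})\otimes(\bold{A}^{T}\bold{A})=\bold{I}$. Since $\bold{A}^{T}\bold{A}$ and $\bold{B}^{T}\bold{B}$ are positive definite, this forces $\bold{A}^{T}\bold{A}=\gamma\bold{I}$ and $\bold{B}^{T}\bold{B}=\gamma^{-1}\bold{I}$ for some $\gamma>0$; rescaling $\bold{A}\mapsto\gamma^{-1/2}\bold{A}$, $\bold{B}\mapsto\gamma^{1/2}\bold{B}$ (the scalars cancel, so $\mathcal{S}$ and the identity $u_{ijkl}=a_{ik}b_{jl}$ are left intact) makes both orthogonal, and the same argument applied to the orthogonal tensor $\mathcal{V}$ handles $\bold{C},\bold{D}$.

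The hard part is precisely this last step: the statement only hands us the rank-one factorizations of the slices, whereas (\ref{MSVD}) insists that the factors be orthogonal, so one must derive the extra orthogonality from that of $\mathcal{U}$ and $\mathcal{V}$ — equivalently, use that a Kronecker product $\bold{B}\otimes\bold{A}$ is orthogonal iff $\bold{A}$ and $\bold{B}$ are, up to a reciprocal scalar. Everything else is bookkeeping, as long as one stays consistent with the column-major convention (\ref{transf1}) when passing between the $\ast_2$/matricization notation and the Tucker mode products.
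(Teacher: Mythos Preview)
Your argument is correct, and its first half --- substituting $u_{ijkl}=a_{ik}b_{jl}$ and $v_{\hat{i}\hat{j}kl}=c_{\hat{i}k}d_{\hat{j}l}$ into the summation form (\ref{BLNTsvdsum}) to obtain $t_{ij\hat{i}\hat{j}}=\sum_{k,l}\sigma_{klkl}\,a_{ik}b_{jl}c_{\hat{i}k}d_{\hat{j}l}$ --- is exactly what the paper's proof does, and indeed is \emph{all} the paper's proof does.

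Where you differ is that you then go on to verify that $\bold{A},\bold{B},\bold{C},\bold{D}$ can be taken orthogonal, via the Kronecker identification $f(\mathcal{U})=\bold{B}\otimes\bold{A}$ and the observation that $(\bold{B}^{T}\bold{B})\otimes(\bold{A}^{T}\bold{A})=\bold{I}$ forces each factor to be a scalar multiple of the identity. The paper simply omits this step; its proof stops at the scalar identity above and relies on the reader to accept that the resulting Tucker form is the multilinear SVD, while the subsequent remark only discusses sparsity of the core and existence of the factors, not their orthogonality. Since the paper's own definition (\ref{MSVD}) explicitly calls for orthogonal factors, your addition actually closes a gap in the paper's argument rather than taking a different route. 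The Kronecker-product reasoning you give is sound and is the natural way to recover orthogonality here.
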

\begin{proof}
From (\ref{BLNTsvdsum}), we have $\mathcal{T}_{ij\hat{i}\hat{j}}=\sum_{kl} \sigma_{klkl}  (\bold{U}_{kl}^{(3,4)})_{ij}  \circ  (\bold{V}_{kl}^{(3,4)})_{\hat{i}\hat{j}}$ which implies $\mathcal{T}_{ij\hat{i}\hat{j}} =\sum_{kl} \sigma_{klkl} a_{ik}b_{jl}c_{\hat{i}k}d_{\hat{j}l}.$
\end{proof}
\begin{remark}
Typically, the core tensor of a multlinear SVD (\ref{MSVD}) is dense. However, the core tensor resulting from Lemma \ref{TSVDtoMSVD} is not dense (possibly sparse); i.e. there are $IJ$ nonzeros elements out of $I^2J^2$ entries in the fourth order core tensor of size $I \times J \times I \times J$.  Similarly, the existence of the decomposition impinges upon the existence of the factors $\bold{A} \in \mathbb{R}^{I \times I }, \bold{B} \in \mathbb{R}^{J \times J}$, $\bold{C} \in \mathbb{R}^{I \times I }$ and $\bold{D} \in \mathbb{R}^{J \times J}$ such that $\mathcal{U}=\bold{A} \circ \bold{B}$ and $\mathcal{V}=\bold{C} \circ \bold{D}$.
\end{remark}

\begin{cor}\label{TEVDtoCP1}
Let $\mathcal{T} \in \mathbb{R}^{I \times J \times I \times J}$ is symmetric and $R=rank(f(\mathcal{A}))$. The tensor EVD \ref{specdecompsum} in Theorem \ref{TEVD} is equivalent to CP (\ref{CP})  if there exist $\mathcal{A} \in \mathbb{R}^{I \times I \times J}, \mathcal{B} \in \mathbb{R}^{J \times I \times J} $ such that $a_{ikl}b_{jkl}=p_{ijkl}$.
\end{cor}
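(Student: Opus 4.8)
The plan is to mimic the argument of Lemma \ref{CPSVD}, now using the symmetry of $\mathcal{T}$ so that the two pairs of CP factor matrices collapse into one pair. First I would start from the summation form (\ref{specdecompsum}) of the tensor EVD in Theorem \ref{TEVD},
\[
\mathcal{T}_{ij\hat{i}\hat{j}} \;=\; \sum_{kl} \bar{\sigma}_{klkl}\,(\bold{P}_{kl}^{(3,4)})_{ij}\,(\bold{P}_{kl}^{(3,4)})_{\hat{i}\hat{j}},
\]
and introduce the single index $r = k + (l-1)I$ ranging over the $IJ$ pairs $(k,l)$, exactly as in the proof of Lemma \ref{CPSVD}. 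Writing $\hat{\sigma}_{rr}$ for $\bar{\sigma}_{klkl}$ and $\bold{P}_r^{(3,4)}$ for $\bold{P}_{kl}^{(3,4)}$ under this reindexing, the double sum becomes $\mathcal{T}_{ij\hat{i}\hat{j}} = \sum_{r=1}^{R} \hat{\sigma}_{rr}\,(\bold{P}_r^{(3,4)})_{ij}\,(\bold{P}_r^{(3,4)})_{\hat{i}\hat{j}}$, the upper limit falling from $IJ$ to $R=rank(f(\mathcal{T}))$ because $f(\mathcal{T})$ has only $R$ nonzero eigenvalues.

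Next I would invoke the hypothesis $p_{ijkl} = a_{ikl}b_{jkl}$, which says precisely that every matrix slice of the orthogonal tensor $\mathcal{P}$ is rank one, $(\bold{P}_{kl}^{(3,4)})_{ij} = a_{ikl}b_{jkl}$. Setting $a_{ir} := a_{ikl}$ and $b_{jr} := b_{jkl}$ under the same reindexing $r=k+(l-1)I$ and substituting gives
\[
\mathcal{T}_{ij\hat{i}\hat{j}} \;=\; \sum_{r=1}^{R} \hat{\sigma}_{rr}\, a_{ir}b_{jr}\,a_{\hat{i}r}b_{\hat{j}r},
\qquad\text{i.e.}\qquad
\mathcal{T} \;=\; \sum_{r=1}^{R} \hat{\sigma}_{rr}\; a_r \circ b_r \circ a_r \circ b_r .
\]
This is a CP decomposition of the form (\ref{CP}) with the distinguishing feature that the first and third factor matrices coincide and the second and fourth coincide: $\bold{A}=\bold{C}\in\mathbb{R}^{I\times IJ}$ is formed by concatenating the vectors $a_r$, and $\bold{B}=\bold{D}\in\mathbb{R}^{J\times IJ}$ by concatenating the $b_r$. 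This equality of factors is the CP-level imprint of the relation $\mathcal{T}=\mathcal{T}^{T}$. For the converse direction, given any CP expansion of $\mathcal{T}$ of this paired form whose column sets can be orthonormalized, re-folding through $f^{-1}$ via (\ref{inversehomor}) and Theorem \ref{c'estgroup} recovers the EVD (\ref{eigdecomp}); hence the two decompositions are equivalent under the stated hypothesis.

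I would close with a remark paralleling the one after Lemma \ref{CPSVD}: existence of $\mathcal{A}$ and $\mathcal{B}$ is equivalent to every matricization $\bold{P}_{kl}^{(3,4)}$ being a rank-one matrix, and, since identical factor matrices violate the genericity behind (\ref{SidBroUnique}), uniqueness of (\ref{CP}) in this symmetric setting should be assessed via Stegeman's extensions \cite{Stegeman1,Stegeman2} of Kruskal's condition rather than via the Sidiropoulos and Bro bound. I do not expect a genuine obstacle; the only point requiring care is bookkeeping: the reindexing $r=k+(l-1)I$ must be applied simultaneously to the eigenvalues, the slices, and the factor vectors, and it is the rank-one hypothesis on each slice together with the symmetry of $\mathcal{T}$ that forces the same $a_r$ (resp.\ $b_r$) to appear in positions $1$ and $3$ (resp.\ $2$ and $4$).
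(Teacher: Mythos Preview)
Your proposal is correct and follows essentially the same route as the paper: the corollary is stated without proof, and the subsequent Remark makes clear that the intended argument is exactly the reindexing $r=k+(l-1)I$ from Lemma \ref{CPSVD} applied to the symmetric EVD (\ref{specdecompsum}), yielding $\mathcal{T}_{ij\hat{i}\hat{j}}=\sum_{r=1}^{R}\bar{\sigma}_{rr}\,a_{ir}b_{jr}a_{\hat{i}r}b_{\hat{j}r}$ with identical factor pairs $\bold{A}=\bold{C}$ and $\bold{B}=\bold{D}$. Your added converse direction and the remark on uniqueness via Stegeman's conditions go slightly beyond what the paper states but are consistent with its discussion.
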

\begin{cor}\label{TEVDtoCP2}
Let $\mathcal{T} \in \mathbb{R}^{I \times J \times I \times J}$ with symmetries $t_{ijkl}=t_{klij}$ and $t_{jikl}=t_{ijkl}$ with $R=rank(f(\mathcal{A}))$. The tensor EVD \ref{specdecompsum} in Theorem \ref{TEVD} is equivalent to CP (\ref{CP})  if there exist $\mathcal{A} \in \mathbb{R}^{I \times I \times J}, \mathcal{B} \in \mathbb{R}^{J \times I \times J} $ such that $a_{ikl}b_{jkl}=p_{ijkl}$.
\end{cor}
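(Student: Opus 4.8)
The plan is to reuse the push-forward argument that proves Lemma~\ref{CPSVD}, viewing Corollary~\ref{TEVDtoCP2} as its symmetric specialisation. First I would apply Theorem~\ref{TEVD}: since the hypotheses include $t_{ijkl}=t_{klij}$, the tensor $\mathcal{T}$ is symmetric in the required sense and so admits the eigenvalue decomposition~(\ref{specdecompsum}), $\mathcal{T}_{ij\hat{i}\hat{j}}=\sum_{kl}\bar{\sigma}_{klkl}\,(\bold{P}_{kl}^{(3,4)})_{ij}\,(\bold{P}_{kl}^{(3,4)})_{\hat{i}\hat{j}}$, with only the $R=rank(f(\mathcal{T}))$ pairs $(k,l)$ having $\bar{\sigma}_{klkl}\neq0$ contributing. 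Exactly as in Lemma~\ref{CPSVD} I would then collapse the double index by setting $r=k+(l-1)I$, writing $\bar{\sigma}_{klkl}=\hat{\sigma}_{rr}$ and $p_{ijr}:=(\bold{P}_{kl}^{(3,4)})_{ij}=(\bold{P}_{r}^{(3,4)})_{ij}$, so that $\mathcal{T}_{ij\hat{i}\hat{j}}=\sum_{r=1}^{R}\hat{\sigma}_{rr}\,p_{ijr}\,p_{\hat{i}\hat{j}r}$.

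The next step is to insert the standing hypothesis. The existence of $\mathcal{A}\in\mathbb{R}^{I\times I\times J}$ and $\mathcal{B}\in\mathbb{R}^{J\times I\times J}$ with $a_{ikl}b_{jkl}=p_{ijkl}$ is, under the re-indexing $r=k+(l-1)I$, precisely the statement that each eigenmatrix is a rank-one outer product $\bold{P}_{r}^{(3,4)}=a_r\circ b_r$ with $a_r\in\mathbb{R}^{I}$ and $b_r\in\mathbb{R}^{J}$. Substituting $p_{ijr}=a_{ir}b_{jr}$ gives $\mathcal{T}_{ij\hat{i}\hat{j}}=\sum_{r=1}^{R}\hat{\sigma}_{rr}\,a_{ir}b_{jr}a_{\hat{i}r}b_{\hat{j}r}$, hence $\mathcal{T}=\sum_{r=1}^{R}\hat{\sigma}_{rr}\,a_r\circ b_r\circ a_r\circ b_r$; absorbing $\hat{\sigma}_{rr}$ into the first mode reproduces the CP form~(\ref{CP}), the factor matrices being obtained by concatenating the $a_r$'s and $b_r$'s. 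The only structural difference from Lemma~\ref{CPSVD}, where $\mathcal{U}$ and $\mathcal{V}$ were unrelated, is that now the first and third factor matrices coincide and the second and fourth coincide, because the EVD carries $\mathcal{P}$ on both sides. The reverse implication -- any CP of this shape defines rank-one eigenmatrices that reassemble, via $f^{-1}$ and the isomorphism of Theorem~\ref{c'estgroup} together with~(\ref{inversehomor}), into an EVD of $\mathcal{T}$ -- is immediate, which gives the claimed equivalence; as in the remark after Lemma~\ref{CPSVD}, I would also note that this CP need not meet the Sidiropoulos--Bro bound~(\ref{SidBroUnique}) unless $f(\mathcal{T})$ is strictly low rank.

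It then remains to record what the extra symmetry $t_{jikl}=t_{ijkl}$, which is what distinguishes this corollary from Corollary~\ref{TEVDtoCP1}, actually buys. Pushing it forward through $f$ shows that $\bold{T}=f(\mathcal{T})$ is fixed under left multiplication by the permutation matrix $\bold{\Pi}$ that interchanges the compound index $(i,j)\leftrightarrow(j,i)$ -- which tacitly presupposes $I=J$, in line with the remark following Theorem~\ref{TEVD}. Hence $\bold{\Pi}$ restricts to the identity on the column space of $\bold{T}$, so every eigenvector of $\bold{T}$ for a nonzero eigenvalue lies in that column space and is therefore fixed by $\bold{\Pi}$; equivalently, every eigenmatrix $\bold{P}_{r}^{(3,4)}$ is a symmetric matrix. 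Combined with $\bold{P}_{r}^{(3,4)}=a_r b_r^{T}$ this forces $b_r$ to be a scalar multiple of $a_r$, so the decomposition collapses to the genuinely symmetric CP $\mathcal{T}=\sum_{r=1}^{R}\mu_r\,a_r\circ a_r\circ a_r\circ a_r$ for suitable scalars $\mu_r$, which is precisely the tensor eigendecomposition of De~Lathauwer et al.~\cite{DeLatCasCar} recalled in that remark.

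I expect the only subtle point to be this last one: turning the index symmetry $t_{jikl}=t_{ijkl}$ into a usable statement about the eigenmatrices. Working directly with four-index sums is clumsy, so the main obstacle is really bookkeeping, and the cleanest route is the matricised argument via $\bold{\Pi}$ above, being careful that the implicit hypothesis $I=J$ is in force whenever the symmetry $t_{jikl}=t_{ijkl}$ is invoked. Everything else parallels Lemma~\ref{CPSVD} and Corollary~\ref{TEVDtoCP1} essentially verbatim.
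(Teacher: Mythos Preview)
Your proposal is correct and follows essentially the same line as the paper, which in fact does not give a standalone proof of Corollary~\ref{TEVDtoCP2} but merely records in the subsequent remark that the argument is that of Lemma~\ref{CPSVD}/Corollary~\ref{TEVDtoCP1} together with the observation that the extra symmetry $t_{jikl}=t_{ijkl}$ forces each eigenmatrix $\bold{P}_{kl}^{(3,4)}$ to be symmetric (hence, being rank one, of the form $a_r a_r^{T}$), yielding the symmetric CP $\mathcal{T}=\sum_r \bar{\sigma}_{rr}\,a_r\circ a_r\circ a_r\circ a_r$. Your matricised justification via the permutation $\bold{\Pi}$ and the column-space argument is simply a more careful version of that one-line assertion; the paper leaves this step implicit.
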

\begin{remark}
The CP decomposition from Corollary \ref{TEVDtoCP1} is $\mathcal{T}_{ij\hat{i}\hat{j}}=\sum_{r=1}^{R}   \bar{\sigma}_{rr}  (\bold{P}_{r}^{(3,4)})_{ij}  \circ  (\bold{P}_{r}^{(3,4)})_{\hat{i}\hat{j}} = \sum_{r=1}^{R}  \bar{\sigma}_{rr} a_{ir}b_{jr}a_{\hat{i}r}b_{\hat{j}r}$ with identical factors: $\bold{A}=\bold{C}$ and $\bold{B}=\bold{D}$ from Lemma \ref{CPSVD}. As in Remark 3.17, the existence of the factors $\bold{A}$ and $\bold{B}$ requires that the matricization, $ \bold{P}_{kl}^{(3,4)}$, to be rank-one matrices. In Corollary \ref{TEVDtoCP2}, the added symmetry $t_{jikl}=t_{ijkl}$ implies that $\bold{P}_{kl}^{(3,4)}$ is symmetric (as well as rank-one). Thus, $(\bold{P}_{kl}^{(3,4)})_{ij}=a_{ikl}a_{jkl} \Longrightarrow \mathcal{T}=\sum_{r=1}^{R}  \bar{\sigma}_{rr} a_{ir}a_{jr}a_{\hat{i}r}a_{\hat{j}r}$. This decomposition is known as symmetric CP decomposition \cite{LekHeng}.
\end{remark}
\begin{cor}\label{TEVDtoMSVD}
Let $\mathcal{T} \in \mathbb{R}^{I \times J \times I \times J}$ is symmetric and $R=rank(f(\mathcal{A}))$. The tensor EVD (\ref{specdecompsum}) in Theorem \ref{TEVD} is equivalent to multilinear SVD (\ref{MSVD})  if there exists $\mathcal{A} \in \mathbb{R}^{I \times I \times J}$ such that $a_{ik}b_{jl}=p_{ijkl}$.
\end{cor}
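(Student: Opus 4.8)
The plan is to follow the template of the proof of Lemma~\ref{TSVDtoMSVD}, specialized to the symmetric setting in exactly the way Corollary~\ref{TEVDtoCP1} specializes Lemma~\ref{CPSVD}. First I would take the expanded eigenvalue decomposition furnished by Theorem~\ref{TEVD},
\[
\bar{a}_{ij\hat{i}\hat{j}} \;=\; \sum_{k,l} \bar{\sigma}_{klkl}\,(\bold{P}_{kl}^{(3,4)})_{ij}\,(\bold{P}_{kl}^{(3,4)})_{\hat{i}\hat{j}},
\]
and invoke the hypothesis that each eigenmatrix admits the rank-one factorization $(\bold{P}_{kl}^{(3,4)})_{ij}=a_{ik}b_{jl}$ for fixed matrices $\bold{A}\in\mathbb{R}^{I\times I}$ and $\bold{B}\in\mathbb{R}^{J\times J}$.

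Substituting yields $\bar{a}_{ij\hat{i}\hat{j}} = \sum_{k,l}\bar{\sigma}_{klkl}\,a_{ik}b_{jl}\,a_{\hat{i}k}b_{\hat{j}l}$, which I would then read off as the multilinear SVD (\ref{MSVD}) in the form $\bar{\mathcal{A}} = \mathcal{S}\bullet_1\bold{A}\bullet_2\bold{B}\bullet_3\bold{A}\bullet_4\bold{B}$, where the core tensor $\mathcal{S}\in\mathbb{R}^{I\times J\times I\times J}$ is the tensor whose only nonzero entries are $s_{klkl}=\bar{\sigma}_{klkl}$ (so the core is sparse, not dense, exactly in the spirit of the remark following Lemma~\ref{TSVDtoMSVD}); the factors on modes $3,4$ repeat those on modes $1,2$ because the same orthogonal tensor $\mathcal{P}$ appears as $\mathcal{P}$ and as $\mathcal{P}^{T}$ in the symmetric decomposition $\bar{\mathcal{A}}=\mathcal{P}\ast_2\bar{\mathcal{D}}\ast_2\mathcal{P}^{T}$. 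To check that $\bold{A}$ and $\bold{B}$ are orthogonal as (\ref{MSVD}) requires, I would push $\mathcal{P}$ through the map $f$: the product structure $p_{ijkl}=a_{ik}b_{jl}$ makes $f(\mathcal{P})$ a Kronecker product of $\bold{A}$ and $\bold{B}$, and since $\mathcal{P}^{T}\ast_2\mathcal{P}=\mathcal{I}$ forces $f(\mathcal{P})$ to be an orthogonal matrix, its Kronecker factors are orthogonal up to reciprocal scalars, which can be absorbed into the core $\mathcal{S}$.

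The only genuine obstacle is the hypothesis itself, not the manipulation: the content of the statement is the demand that each matricization $\bold{P}_{kl}^{(3,4)}$ be rank one with a factorization consistent across $(k,l)$, equivalently that the orthogonal tensor $\mathcal{P}$ factor as $\mathcal{P}=\bold{A}\circ\bold{B}$ in the interleaved sense $p_{ijkl}=a_{ik}b_{jl}$. Once that is granted the derivation above is pure index bookkeeping; without it the tensor EVD generally produces a dense core and no such $\bold{A},\bold{B}$ exist. Accordingly I would organize the proof as (i) the algebraic reduction of (\ref{specdecompsum}) to $\sum_{k,l}\bar{\sigma}_{klkl}\,a_{ik}b_{jl}\,a_{\hat{i}k}b_{\hat{j}l}$; (ii) the identification of this expression with (\ref{MSVD}) together with the description of its sparse core; and (iii) a closing remark, parallel to the one after Lemma~\ref{TSVDtoMSVD}, recording that existence of the claimed decomposition is equivalent to existence of the factors $\bold{A},\bold{B}$, i.e.\ to writing $\mathcal{P}$ as the outer product $\bold{A}\circ\bold{B}$.
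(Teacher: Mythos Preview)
Your proposal is correct and matches the paper's approach: the paper does not give a separate proof for this corollary but only the accompanying remark, which carries out exactly your step (i), writing $\mathcal{T}_{ij\hat{i}\hat{j}}=\sum_{kl}\sigma_{klkl}(\bold{P}_{kl}^{(3,4)})_{ij}\circ(\bold{P}_{kl}^{(3,4)})_{\hat{i}\hat{j}}=\sum_{kl}\sigma_{klkl}\,a_{ik}b_{jl}a_{\hat{i}k}b_{\hat{j}l}$ ``following Lemma~\ref{TSVDtoMSVD}.'' Your additional discussion of orthogonality of $\bold{A},\bold{B}$ via the Kronecker structure of $f(\mathcal{P})$ and of the sparse core goes beyond what the paper records, but is consistent with it.
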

\begin{remark}
The multilinear SVD from Corollary (\ref{TEVDtoMSVD}) is $\mathcal{T}_{ij\hat{i}\hat{j}}=\sum_{kl} \sigma_{klkl}  (\bold{P}_{kl}^{(3,4)})_{ij}  \circ  (\bold{P}_{kl}^{(3,4)})_{\hat{i}\hat{j}}=\sum_{kl} \sigma_{klkl} a_{ik}b_{jl}a_{\hat{i}k}b_{\hat{j}l}$ following Lemma \ref{TSVDtoMSVD}.
\end{remark}

\section{Multilinear Systems}
A multilinear system is a set of $M$ equations with $N$ unknown variables. A linear system is a multilinear system which is conveniently expressed as $\bold{A}\bold{x}=\bold{b}$ where $\bold{A} \in \mathbb{R}^{M \times N}, \bold{x} \in \mathbb{R}^N$ and $\bold{b} \in \mathbb{R}^M$. Similarly, $\mathcal{A} \ast_2 \mathcal{X} = \mathcal{B}$ has $M=I \cdot J \cdot K \cdot L$ equations and $N=R\cdot S\cdot K \cdot L$ unknown variables  if  $\mathcal{A} \in \mathbb{R}^{I \times J \times R \times S}, \mathcal{X} \in \mathbb{R}^{R \times S \times K \times L}$ and $\mathcal{B} \in \mathbb{R}^{I \times J \times K \times L}$. Equivalently, we define a linear transformation $\mathfrak{L}: \mathbb{R}^N \rightarrow \mathbb{R}^M$ such that $\mathfrak{L}(\bold{x})=\bold{A} \bold{x}$ with the property $\mathfrak{L}(c \bold{x} + d \bold{y})=c \mathfrak{L}(\bold{x}) + d \mathfrak{L}(\bold{y})$ for some scalars $c$ and $d$. A bilinear system is defined through $\mathfrak{B}: \mathbb{R}^M \times \mathbb{R}^N \rightarrow \mathbb{R}$ with $\mathfrak{B}(\bold{x},\bold{y})=\bold{y}^T \bold{B} \bold{x}=\bold{B} \bullet_1 \bold{x} \bullet_2 \bold{y}$ where $\bold{B} \in \mathbb{R}^{M \times N}$.  The bilinear map has the linearity properties:
\begin{eqnarray*}
\mathfrak{B}(c \bold{x_1} + d \bold{x_2}, \bold{y})&=&c \mathfrak{B}(\bold{x_1},\bold{y}) + d \mathfrak{B}(\bold{x_2},\bold{y})\\
&\text{and}&\\
\mathfrak{B}(\bold{x}, \bar{c} \bold{y_1} + \bar{d} \bold{y_2})&=&\bar{c} \mathfrak{B}(\bold{x},\bold{y_1}) + \bar{d} \mathfrak{B}(\bold{x},\bold{y_2})
\end{eqnarray*}
for some scalars $c,\bar{c},d,\bar{d}$ and vectors $\bold{x}, \bold{x_1}, \bold{x_2} \in \mathbb{R}^N, \bold{y}, \bold{y_1}, \bold{y_2} \in \mathbb{R}^M$. 

We can define multilinear transformations $\mathcal{M}: \mathbb{R}^{I_1 \times \ldots \times I_N} \rightarrow \mathbb{R}^{J_1 \times \ldots \times J_M}$ for the following multilinear systems:
\begin{itemize}
\item
$\mathcal{B}  \bullet_1 \bold{x} \bullet_2 \bold{y} =b$ where $\mathcal{B} \in \mathbb{R}^{I \times J \times K}$, $\bold{x} \in \mathbb{R}^I$, $\bold{y} \in \mathbb{R}^J$ and $b \in \mathbb{R}$
\item
$\mathcal{M} \ast_2 \bold{X}  \ast_2 \bold{Y}  = b$ where $\mathcal{M} \in \mathbb{R}^{I \times J \times K \times L},$ $\bold{X} \in \mathbb{R}^{K \times L}$,  $\bold{Y} \in \mathbb{R}^{I \times J}$ and  $b \in \mathbb{R}$
\item
$\mathcal{M} \ast_2 \mathcal{X} \ast_3 \mathcal{Y} = \bold{B}$ where $\mathcal{M} \in \mathbb{R}^{I \times J \times K \times L \times M \times N},$ $\mathcal{X} \in \mathbb{R}^{M \times N \times O}$, $\mathcal{Y} \in \mathbb{R}^{K \times L \times O}$ and $\bold{B} \in \mathbb{R}^{I \times J}$. 
\end{itemize}

Multilinear systems model many phenomena in engineering and sciences. In the field of continuum physics and engineering, isotropic and anisotropic elastic models are multilinear systems \cite{LaiRubinKrempl}.  For example,
\[\mathcal{C} \ast_2 \bold{E} =\bold{T}  \]
where $\bold{T}$ and $\bold{E}$ are second order tensors modeling stress and strain, respectively, and the fourth order tensor $\mathcal{C}$ refers to the elasticity tensor. Multilinear systems are also prevalent in the numerical methods for solving partial differential equations (PDEs). To approximate solutions to PDEs, the given continuous problem is typically discretized by using finite element methods  or finite difference schemes to obtain a discrete problem.  The discrete problem is a multilinear system with finitely many unknowns.  

\subsection{Poisson problem with multilinear system solver}
Consider the two-dimensional Poisson problem
 \begin{equation}
\begin{array}{cc}
	-\nabla^2v=f  & \mbox{~in~} \Omega,\\
	u=0                & \mbox{~on~} \Gamma
\label{poisson2d}
\end{array}
\end{equation}
where $\Omega=\{(x,y): 0<x,y<1  \}$ with boundary $\Gamma$, $f$ is a given function and 
\[\nabla^2 v = \frac{\partial^2 v}{\partial x^2}  + \frac{\partial^2 v}{\partial y^2}.\] 
We compute an approximation to the unknown function $v(x,y)$ in (\ref{poisson2d}). Several problems in physics and mechanics are modeled by (\ref{poisson2d}) where the solution $v$ represent, for example, temperature, electro-magnetic potential or  displacement of an elastic membrane fixed at the boundary.% under a transversal load of intensity.

The mesh points are obtained by discretizing the unit square domain with step sizes, $\Delta x$  in the $x$-direction and $\Delta y$ in the $y$-direction; assume $\Delta x =\Delta y$ for simplicity. From the standard central difference approximations, the difference formula,
\begin{equation}
\frac{v_{l-1,m}-2v_{l,m}+v_{l+1,m}}{\Delta x^2}+\frac{v_{l,m-1}-2v_{l,m}+v_{l,m+1}} {\Delta y^2}=f(x_l,y_m),
\label{fdpoisson}
\end{equation}
is obtained. Then the difference equation (\ref{fdpoisson}) is equivalent to
\begin{eqnarray}\label{2dpoisson}
\bold{A_N}\bold{V}+ \bold{V}\bold{A_N}=(\Delta x)^2\bold{F}
\end{eqnarray}
where
\begin{eqnarray}\label{Amat}
\bold{A_N}= \left [
\begin{array}{cccc}
2    & -1         &                & 0\\
-1   & 2          &   \ddots  &   \\
       & \ddots &    \ddots &-1 \\
0     &             &  -1           & 2\\
\end{array} \right ],~
\end{eqnarray}
\begin{eqnarray} \label{VFmat}
\bold{V}= \left [
\begin{array}{cccc}
v_{11}  & v_{12} & \hdots          & v_{1N}     \\
v_{21}  & v_{22} & \ddots          & \vdots       \\
\vdots   & \ddots  & \ddots          & v_{N-1N}  \\
v_{N1} & \hdots  & v_{NN-1}    & v_{NN}\\
\end{array}\right ]~~~~\mbox{and}~~~~
%\label{Fmat}
\bold{F}=
\left [
\begin{array}{cccc}
f_{11}  & f_{12} & \hdots          & f_{1N}     \\
f_{21}  & f_{22} & \ddots          & \vdots       \\
\vdots   & \ddots  & \ddots          & f_{N-1N}  \\
f_{N1} & \hdots  & f_{NN-1}    & f_{NN}\\
\end{array}\right ]
\end{eqnarray}
where the entries of $\bold{V}$ and $\bold{F}$ are the values on the mesh on the unit square where $(x_i,y_j)=(i\Delta x, j\Delta x) \in[0,1] \times [0,1]$. Here the Dirichlet boundary conditions are imposed so the values of V are zero at the boundary of the unit square; i.e. $v_{i0}=v_{iN+1}=v_{0,j}=v_{N+1j}=0$ for $0<i,j<N+1$. Typically, $\bold{V}$ and $\bold{F}$ are vectorized which leads to the linear system:
\begin{eqnarray}\label{2dpoissonb}
\bold{A_{N \times N}} \cdot \bold{v}=\left [
\begin{array}{cccc}
\bold{A_N} + 2\bold{I_N}    & -\bold{I_N}         &                & 0\\
-\bold{I_N} & \bold{A_N} + 2\bold{I_N}            &   \ddots  &   \\
       & \ddots &    \ddots &-\bold{I_N}  \\
0     &             &  -\bold{I_N}            & \bold{A_N} + 2\bold{I_N}\\
\end{array} \right ] ~
\left [
\begin{array}{c}
v_{11}\\
v_{12}\\
\vdots\\
v_{NN}
\end{array}\right ]= (\Delta x)^2
\left [
\begin{array}{c}
f_{11}\\
f_{12}\\
\vdots\\
f_{NN}
\end{array}\right ]
\end{eqnarray}
In \cite{Demmel}, the Poisson's equation in two-dimension is expressed as a sum of Kronecker products; i.e.
\begin{eqnarray}\label{kron1}
\bold{A_{N \times N}}=\bold{I_N} \otimes \bold{A_N} + \bold{A_N} \otimes \bold{I_N}.
\end{eqnarray}
The discretized problem in three-dimension is 
\begin{eqnarray}\label{kron2}
 (\bold{A_N} \otimes \bold{I_N} \otimes \bold{I_N} +  \bold{I_N}  \otimes \bold{A_N} \otimes \bold{I_N} +  \bold{I_N} \otimes \bold{I_N} \otimes \bold{A_N}) \cdot vec({\bold{V}})=vec({\bold{F}}).   
\end{eqnarray}
High dimensional Poisson problems are formulated as sums of Kronecker products with vectorized source term and unknowns.

\subsubsection{Higher-Order Tensor Representation}
The higher-order representation of  the 2D discretized Poisson problem (\ref{poisson2d}) is 
\begin{eqnarray}\label{mspoisson2d}
\mathcal{A}_N \ast_2 \bold{V} = \bold{F}
\end{eqnarray}
where $\mathcal{A}_N \in \mathbb{R}^{N \times N \times N \times N}$ and matrices, $\bold{V}$ and $\bold{F}$, are the discretized functions $v$ and $f$ on a unit square mesh defined in (\ref{VFmat}).  The non-zeros entries of the matrix slice $\bold{A_N}^{(3,4)}_{k=\alpha,l=\beta} \in \mathbb{R}^{N \times N}$ are the following:
\begin{eqnarray}\label{fivepoint}
\begin{cases}
(\bold{A_N}^{(3,4)}_{k=\alpha,l=\beta})_{\alpha,\beta}=\frac{4}{(\Delta x)^2}\\
(\bold{A_N}^{(3,4)}_{k=\alpha,l=\beta})_{\alpha-1\beta}=\frac{-1}{(\Delta x)^2}\\
(\bold{A_N}^{(3,4)}_{k=\alpha,l=\beta})_{\alpha+1,\beta}=\frac{-1}{(\Delta x)^2}\\
(\bold{A_N}^{(3,4)}_{k=\alpha,l=\beta})_{\alpha,\beta-1}=\frac{-1}{(\Delta x)^2}\\
(\bold{A_N}^{(3,4)}_{k=\alpha,l=\beta})_{\alpha,\beta+1}=\frac{-1}{(\Delta x)^2}\\
\end{cases}
\end{eqnarray}
for $\alpha,\beta=2,\ldots,N-1.$ These entries form a five-point stencil; see Figure \ref{fig:plot2}. The discretized three-dimensional Poisson equation is
\begin{eqnarray}\label{mspoisson3d}
\bar{\mathcal{A}}_N \ast_3 \mathcal{V} = \mathcal{F}
\end{eqnarray}
where  $\bar{\mathcal{A}}_N \in \mathbb{R}^{N \times N \times N \times N \times N \times N}$ and $\mathcal{V}, \mathcal{F} \in \mathbb{R}^{N \times N \times N}$, containing the values on the discretized unit cube.  Similarly, the entries of the subtensor slice $(\bar{\mathcal{A}}_N)^{(4,5,6)}_{l,m,n} \in \mathbb{R}^{N \times N \times N} $ of  $\bar{\mathcal{A}}_N$ would follow a seven-point stencil; i.e.
\begin{eqnarray}\label{sevenpoint}
\begin{cases}
((\bar{\mathcal{A}}_N)^{(4,5,6)}_{l=\alpha,m=\beta,n=\gamma})_{\alpha,\beta,\gamma}=\frac{6}{(\Delta x)^3}\\
((\bar{\mathcal{A}}_N)^{(4,5,6)}_{l=\alpha,m=\beta,n=\gamma})_{\alpha-1,\beta,\gamma}=\frac{-1}{(\Delta x)^3}\\
((\bar{\mathcal{A}}_N)^{(4,5,6)}_{l=\alpha,m=\beta,n=\gamma})_{\alpha+1,\beta,\gamma}=\frac{-1}{(\Delta x)^3}\\
((\bar{\mathcal{A}}_N)^{(4,5,6)}_{l=\alpha,m=\beta,n=\gamma})_{\alpha,\beta-1,\gamma}=\frac{-1}{(\Delta x)^3}\\
((\bar{\mathcal{A}}_N)^{(4,5,6)}_{l=\alpha,m=\beta,n=\gamma})_{\alpha,\beta+1,\gamma}=\frac{-1}{(\Delta x)^3}\\
((\bar{\mathcal{A}}_N)^{(4,5,6)}_{l=\alpha,m=\beta,n=\gamma})_{\alpha,\beta,\gamma-1}=\frac{-1}{(\Delta x)^3}\\
((\bar{\mathcal{A}}_N)^{(4,5,6)}_{l=\alpha,m=\beta,n=\gamma})_{\alpha,\beta,\gamma+1}=\frac{-1}{(\Delta x)^3}\\
\end{cases}
\end{eqnarray}
for $\alpha,\beta,\gamma=2,\ldots,N-1$ since $v_{ijk}$ satisfies
\[6v_{ijk} -v_{i-1jk} - v_{i+1jk} - v_{ij-1k} -v_{ij+1k} - v_{ijk-1} -v_{ijk+1} = (\Delta x)^3 f_{ijk}.    \]
\begin{figure}[htp]
  \begin{center}
    \subfigure[5-point Stencil]{\label{fig:stencil1}\includegraphics[width = 25 mm]{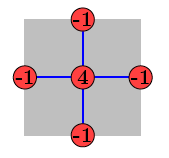}}
    \subfigure[7-point Stencil]{\label{fig:stencil2}\includegraphics[width = 42 mm]{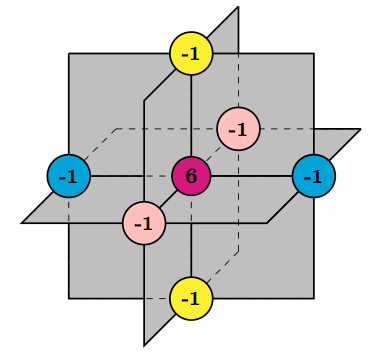}}
    \end{center}
  \caption{Stencils for higher-order tensors.}
  \label{fig:plot2}
\end{figure}
Multilinear systems like (\ref{mspoisson2d}) and (\ref{mspoisson3d}) are the tensor representation of  high dimensional Poisson problems.

\subsection{Iterative Methods}
Here we discuss some methods for solving multilinear system. A naive approach is the Gauss-Newton algorithm for approximating $\mathcal{A}^{-1}$ through the function,
\[ g(\mathcal{X})=\mathcal{A} \ast_2 \mathcal{X} -\mathcal{I} =0  \]
where $\mathcal{I}$ is the identity fourth-order tensor defined in (\ref{identity}) and $\mathcal{X}$ is the unknown tensor. This method is highly inefficient due to a very expensive inversion of a Jacobian. 

To save memory and operational costs, we consider iterative methods for solving multilinear systems. The pseudo-codes in Table \ref{fig:algorithms} describe the biconjugate gradient (BiCG) method for solving multilinear system, $\mathcal{A}\ast_2 \mathcal{X}=\mathcal{B}$, without matricizations. Recall that  the BiCG method requires symmetric and positive definite matrix so that the multilinear system is premultiplied by its transpose $\mathcal{A}^T$ which is defined in Section $3$. 
The BiCG method solves multilinear system by searching along $\mathcal{X}_k=\mathcal{X}_{k-1}+ \alpha_{k-1} \mathcal{P}_{k-1}$ with a line parameter $\alpha_{k-1}$ and a search direction $\mathcal{P}_{k-1}$ while minimizing the objective function $\phi(\mathcal{X}_k + \alpha_{k-1} \mathcal{P}_k)$ where $\phi(\mathcal{X})=\frac{1}{2}\mathcal{X}^T \ast_2 \mathcal{A} \ast_2 \mathcal{X} - \mathcal{X}^T \ast_2 \mathcal{B}$.  It follows that $\phi(\widehat{\mathcal{X}})$ attains a minimum iteratively and precisely at an optimizer $\widehat{\mathcal{X}}$ where $\mathcal{A}\ast_2 \widehat{\mathcal{X}}=\mathcal{B}$. 

The higher-order Jacobi method is also implemented for comparison. The Jacobi method for tensors is an iterative method based on \emph{splitting} the tensor into its diagonal entries from the lower and upper diagonal entries. %Observe in Table 1 that the update $\mathcal{X}_{ij}^{(k+1)}$ only requires the entries of the tensor; no explicit construction of the tensor is needed. 
In Figure \ref{fig:poissonplot}, we approximate the solution to the multilinear system (\ref{mspoisson2d}) using two multilinear iterative methods: higher-order biconjugate gradient and Jacobi methods. See Table \ref{fig:algorithms} for the pseudo-codes of the algorithms. In Figure \ref{fig:poissonplot}, BiCG converged faster than Jacobi with fewer number of iterations.  The convergence of Jacobi is slow since the spectral radius with respect to the Poisson's equation is near one \cite{Demmel}. The approximation in Figure \ref{fig:poissonplot} is first order accurate. 

Formulating the discretized Poisson equation in terms of higher-order tensors is convenient since its entries follow a stencil format in Figure \ref{fig:plot2}. The boundary conditions are easily imposed without rearrangements of entries. Also the unknown $v$ is solved on a higher-order mesh; no vectorization is needed. The multilinear system representation has the potential to become a reliable solver of PDEs in very high dimension. For example, implementation of new tensor decompositions which reduce the number of tensor modes are required for higher dimensional problems. The use of low rank preconditioner in tensor format can dramatically increase convergence rates in iterative methods as in the case for sparse linear large systems \cite{Bramley}.

\begin{table}[htp]
  \begin{center}
    \subfigure[Higher-Order Biconjugate Gradient]{\label{fig:hobicg}\includegraphics[width = 75 mm]{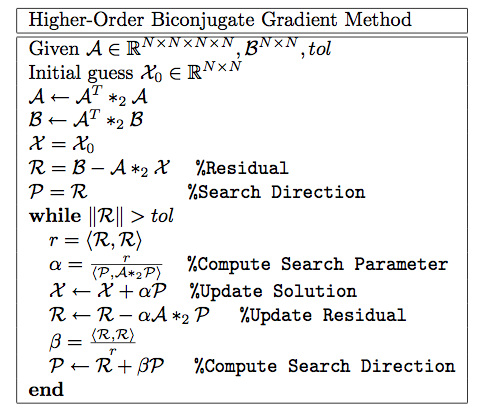}}
    \subfigure[Higher-Order Jacobi]{\label{fig:hoj}\includegraphics[width = 75 mm]{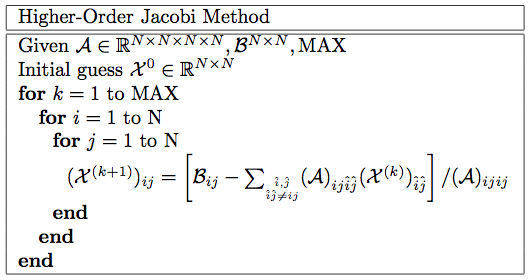}}
      \end{center}
  \caption{Psuedo-codes for Iterative Solvers.}
  \label{fig:algorithms}
\end{table}

\begin{figure}[htp]
  \begin{center}
    \subfigure[Aprroximated Solution]{\label{fig:poisson}\includegraphics[width = 70 mm]{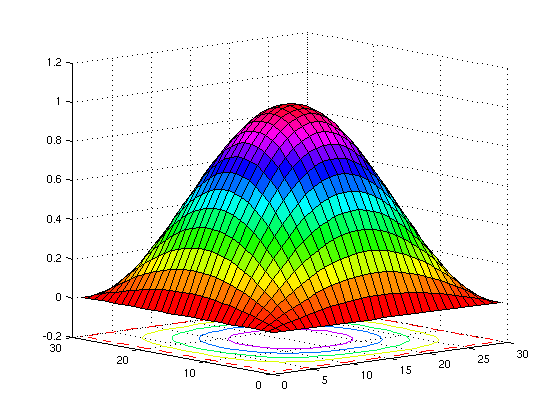}}
    \subfigure[Bicongugate Gradient (blue -.-) and Jacobi (red --)]{\label{fig:stiffJvBicg1}\includegraphics[width = 70 mm]{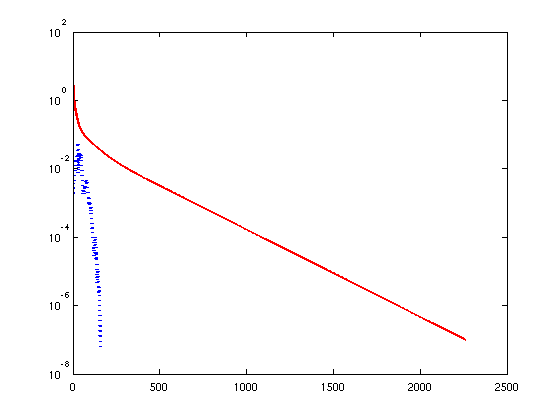}}
     \end{center}
  \caption{A solution to the Poisson equation in 2D with Dirichlet boundary conditions.}
  \label{fig:poissonplot}
\end{figure}

\section{An Eigenvalue Problem of the Anderson Model}
The Anderson model, Anderson's celebrated and ultimately Nobel prize winning work \cite{Anderson}, is the archetype and most studied model for understanding the spectral and transport properties of an electron in a disordered medium. In 1958, Anderson \cite{Anderson} described the behavior of electrons in a crystal with impurities, that is, when electrons can deviate from their sites by hopping from atom to atom and are constrained to an external random potential modeling the random environment.  This is called the tight binding approximation.  He argued heuristically that electrons in such systems result in a loss of the conductivity properties of the crystal, transforming it from conductors to insulators.

\subsection{The Anderson Model and Localization Properties}

The Anderson Model is a discrete random Schr\"odinger operator defined on a lattice $\mathbb{Z}^d$. More specifically, the Anderson Model is a random Hamiltonian $H_{\omega}$ on $\ell^2(\mathbb{Z}^d)$, $d \geq 1$, defined by
\begin{eqnarray}\label{Hamiltonian}
H_{\omega} = -\Delta + \lambda V_{\omega}
\end{eqnarray}
where $\Delta(x,y)=1$ if $\vert x-y  \vert =1$ and zero otherwise (the discrete Laplacian) with spectrum $[-2d,2d]$ and the random potential
$V_{\omega}=\{ V_{\omega}(x), x \in \mathbb{Z}^d\}$ consists of independent identically distributed random variables on $[-1,1]$ which we assume to have bounded and compactly supported density $\rho$. The disorder parameter  is the nonnegative $\lambda > 0$. The spectrum of $H_{\omega}$ can be explicitly described by
$$
\sigma(H_{\omega})= \sigma{(-\Delta)} + \lambda \mbox{~supp}(\rho) = [-2d,2d] + \lambda \mbox{~supp}(\rho).
$$
\begin{Remark}
The random potential $V_{\omega}$ is a multiplication operator on $\ell_2(\mathbb{Z}^d)$ with matrix elements $V_{\omega}(x)=v_x(\omega)$ where $(v_x(\omega))_{x \in \mathbb{Z}^d}$ is a collection of (i.i.d.) random variables with distribution $\rho$ indexed by $\mathbb{Z}^d$.
\end{Remark}
The random Schr\"odinger operator model disordered solids. The atoms or nuclei of a crystal are distributed in a lattice in a \emph{regular} way.  Since most solids are not ideal crystals, the positions of the atoms may deviate away from the ideal lattice positions.  This phenomena can be attributed to imperfections in the crystallization, glassy materials or a mixture of alloys or doped semiconductors. Thus to model disorder, a random potential $V_{\omega}$ perturbs the pure laplacian Hamiltonian ($-\Delta$) of a perfect metal. The time evolution of a quantum particle $\psi$ is determined by the Hamiltonian $H_{\omega}$; i.e.
\[ \psi(t)= e^{itH_{\omega}}\psi_0.\]
Thus the spectral properties of $H_{\omega}$ is studied to extract valuable information. The localization properties of the Anderson Model are of interest. 
For instance, the localization properties are characterized by the spectral properties of the Hamiltonian $H_{\omega}$; see the references \cite{Dirk,Kirsch,Stolz}. The Hamiltonian $H_{\omega}$ exhibits spectral localization if $H_{\omega}$ has almost surely pure point spectrum with exponentially decaying eigenfunctions. 

\begin{Remark}
Recall from \cite{ReedSimon} for any self-adjoint operator $H$, the spectral decomposition is
$$
\sigma(H)=\sigma_p(H) \cup \sigma_{ac}(H) \cup \sigma_{sc}(H)
$$
corresponding to the invariant subspaces $H_p$ of point spectrum, $H_{ac}$ of absolutely continuous and $H_{sc}$ to singular continuous spectrum.
\end{Remark}

The localization properties of the Anderson model can be described by spectral or dynamical properties. Let $I \subset \mathbb{R}$. 
\begin{definition}
We say that $H_{\omega}$ exhibits spectral localization in $I$ if  $H_{\omega}$ almost surely has
pure point spectrum in $I$ (with probability one), that is,
$$
\sigma(H_{\omega}) \cap I \subset \sigma_p (H_{\omega})~~\mbox{with~probability~one}
$$
Moreover, the random Schr\"odinger operator $H_{\omega}$ has exponential spectral localization in $I$ and the eigenfunctions corresponding to eigenvalues in $I$ decay exponentially.
\end{definition}
Thus if for almost all $\omega$, the random Hamiltonian $H_{\omega}$ has a complete set of eigenvectors $(\psi_{\omega,n})_{n \in \mathbb{N}}$ in the energy interval $I$ satisfying
$$
\vert  \psi_{\omega,n}(x) \vert  \leq  C_{\omega,n} e^{-\mu \vert  x - x_{\omega, n}  \vert}
$$
with localization center $x_{\omega, n}$ for $\mu > 0$ and $C_{\omega,n} < \infty$, then the exponential spectral localization hold on $I$.
\begin{Remark}
Let $V : \ell_2(\mathbb{Z}) \rightarrow \ell_2(\mathbb{Z})$ be a multiplication operator and suppose $v: \mathbb{Z} \rightarrow \mathbb{R}$ is a function. Then, $Vf(x)=v(x)f(x)$ and thus, $\sigma(V)=\mbox{range(v)}$. Suppose $f(x)$ is the Dirac delta function; i.e.
\begin{eqnarray*}
f(x)=\delta(x-x_0)=\begin{cases}
1 & x=x_0\\
0 & x\neq x_0,
\end{cases}
\end{eqnarray*}
\end{Remark}
then $Vf(x)=v(x_0)f(x)$ which implies that  $\sigma(V)=\sigma_p(V)$; i.e. $V$ has a pure point spectrum.

\begin{figure}[htp]
\begin{center}
 \subfigure[$\lambda=1,~N=50$]{\label{fig:Anderson1Da}\includegraphics[width = 75 mm]{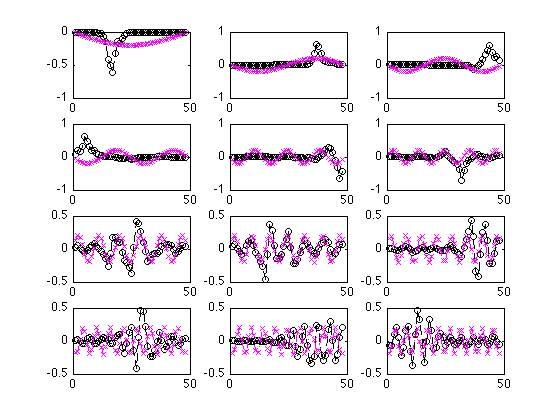}}
    \subfigure[$\lambda=.1,~N=50$]{\label{fig:Anderson1Db}\includegraphics[width = 75 mm]{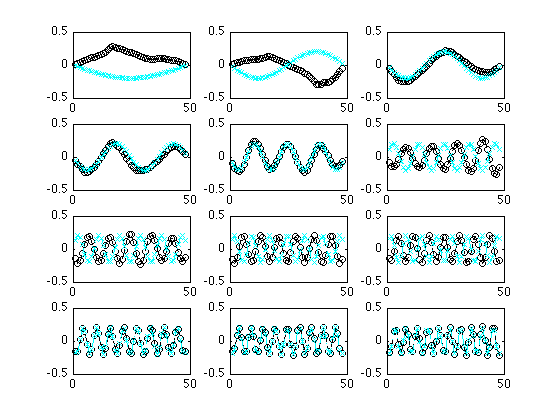}}
\end{center}
  \caption{One-dimensional Eigenvectors of the Discrete Schr\"odinger Operator (-x-) and the Anderson Model (black, -o-) for various modes.}
  \label{fig:Anderson1D}
\end{figure}

\begin{figure}[htp]
\begin{center}
 \subfigure[$\lambda=.1,~N=100$]{\includegraphics[width = 100 mm]{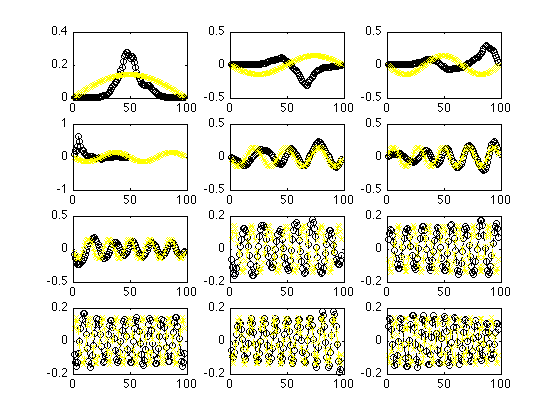}}
\end{center}
\caption{One-dimensional Eigenvectors of the Discrete Schr\"odinger Operator (-x-) and the Anderson Model (black, -o-) for various modes.}
\label{fig:Anderson1D2}
\end{figure}

\begin{definition}
A random Schr\"odinger operator has strong dynamical localization in an interval $I$ if for all $q>0$ and 
all $\phi \in \ell_2(\mathbb{Z}^d)$ with compact support
$$
\mathbb{E}\left [  \sup_t \Vert~\vert X \vert^q e^{-itH_{\omega}} \chi_I(H_{\omega}) \psi \Vert^2  < \infty \right ]
$$
where $\chi_I$ is an indicator function and $X$ is a multiplicative operator from $\ell_2(\mathbb{Z}^d) \rightarrow \ell_2(\mathbb{Z}^d)$ defined as $\vert X \vert \psi =\vert x \vert \psi(x)$.
\end{definition}
Dynamical localization in this form implies that all moments of the position operator are bounded in time.

As noted before, the Anderson model is a well-studied subject area for understanding the spectral and transport properties of an electron in a disordered medium, thus there are numerous results in both physics and mathematics literature; see \cite{Dirk} and the references therein. Mathematically, localization has been proven for the one-dimensional case for all energies and arbitrary disorder $\lambda$. For example, Kunz and Souillard \cite{KuSou} have proven in 1980 for $d=1$ and \emph{nice} distribution $\rho$ that localization is always present for any small disorder $\lambda$. In 1987, Carmona et al \cite{Carmona}  generalized this result in $d=1$ for any distribution $\rho$. In any $d$ dimension, for all energies and sufficiently large disorder ($\lambda >>1$), localized states are present.  For $d=2$ and for Gaussian distribution, it is conjectured that there is no extended state for any amount disorder $\lambda$ similar to the results for $d=1$. For $d \geq 3$, there exists $\lambda_0 > 0$ such that for $\lambda < \lambda_0$, $H$ has pure absolutely continuous spectrum. It is known (see \cite{Dirk}) that there exist $\lambda_1 < \infty$ such that for $\lambda > \lambda_1$, $H_{\lambda}$ has dense pure spectrum. There are still many open problems like the extended state conjecture \cite{Erdos}.

\subsection{Approximation of Eigenvectors}

To approximate the eigenvectors of the multidimensional Anderson model, the eigenvalue decomposition in Theorem \ref{TEVD} is applied to the Hamiltonian $H_{\omega}$. The Hamiltonian $H_{\omega}$ in two and three dimensions are formed into fourth- and sixth-order tensors using the same stencils in Figure \ref{fig:plot2} corresponding to the entries in (\ref{fivepoint}) and (\ref{sevenpoint}), respectively. The only main differences are that the center nodes are centered around zero and have random entries, 
\begin{eqnarray}
({H_{\omega}}^{(3,4)}_{k=\alpha,l=\beta})_{\alpha,\beta}=\frac{\sigma}{(\Delta x)^2} \label{hamiltontensor1}\\
\mbox{and} \nonumber\\
({H_{\omega}}^{(4,5,6)}_{l=\alpha,m=\beta,n=\gamma})_{\alpha,\beta,\gamma}=\frac{\tau}{(\Delta x)^3} \label{hamiltontensor2}
\end{eqnarray}
where $\sigma$ and $\tau$ are random numbers with uniform distribution on $[-1,1]$ accounting for the random diagonal potential $V_{\omega}$. With the formulations of the Hamiltonian like (\ref{kron1}) and (\ref{kron2}), the uniform distribution on $[-1,1]$ on the random potential cannot be guaranteed. But the higher-order tensor representation easily preserved this structure. To numerically compute the higher-dimensional eigenvector, tensor representation of the Hamiltonian is necessary before the appropriate Einstein product rules and mappings are applied.

\begin{figure}[htp]
\begin{center}
 \subfigure[N=29]{\label{fig:Anderson2Da}\includegraphics[width = 133 mm]{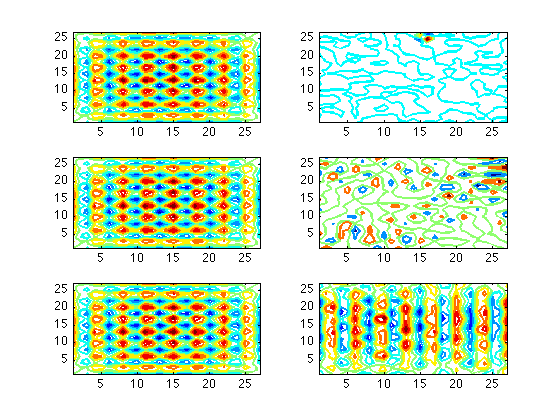}}
    \subfigure[N=48]{\label{fig:Anderson2Db}\includegraphics[width = 133 mm]{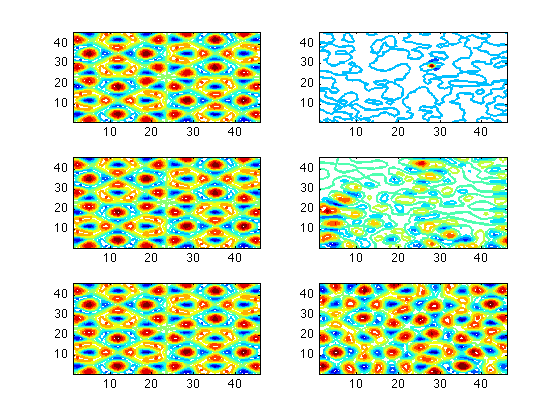}}
\end{center}
  \caption{Two-dimensional Eigenvectors of the Discrete Schr\"odinger Operator (left column) and the Anderson Model (right column) for varying disorder ($\lambda=10$ (top), $\lambda=1$ (middle) and $\lambda=.1$ (bottom)).}
  \label{fig:Anderson2D}
\end{figure}

\begin{figure}[htp]
\begin{center}
\includegraphics[width = 100 mm]{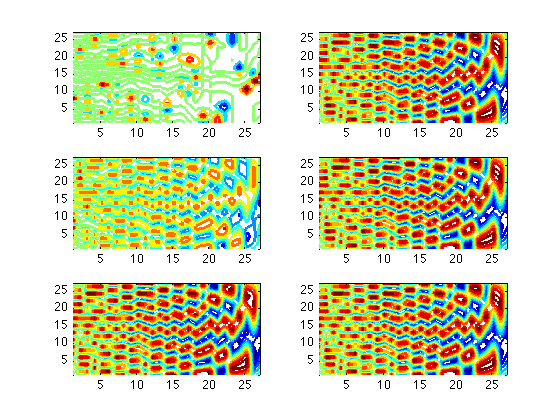}
\end{center}
\caption{Factors of the Multilinear SVD Decomposition \cite{HOSVD} of the Two-dimensional Discrete Schr\"odinger Operator (right column) and the Anderson Model (left column) for varying disorder ($\lambda=10$ (top), $\lambda=1$ (middle) and $\lambda=.1$ (bottom)).}
\label{fig:Anderson2Db}
\end{figure}

In Figures \ref{fig:Anderson1D}, \ref{fig:Anderson1D2}, \ref{fig:Anderson2D} ,\ref{fig:Anderson2Db} and \ref{fig:Anderson3D},  the eigenfunctions are approximated by the eigenvectors from the both discrete Schr\"odinger and random Schr\"odinger (Anderson) models.  In Figures \ref{fig:Anderson1D} and \ref{fig:Anderson1D2}, the eigenvectors of the Anderson Model in one dimension are definitely more localized than the eigenvectors of the discrete random Schr\"odinger model in one dimension which are consistent with the results in \cite{Dirk} for the Anderson model in one dimension. Observe that for large amount of disorder (e.g. $\lambda=1$), the localized states are apparent.  However this is not true for smaller amount of disorder (e.g. $\lambda=.1$). The localization is not so apparent for  $\lambda=.1$ for $N=50$, but when the number of atoms is increased, that is, setting $N=100$, the localized eigenvectors are present as in the case when $\lambda=1$; see Figures \ref{fig:Anderson1D} (part b) and \ref{fig:Anderson1D2}

In the contour plots of Figures \ref{fig:Anderson2D}, \ref{fig:Anderson2Db} and \ref{fig:Anderson3D}, the eigenvectors in two and three dimensions of the Anderson model are more \emph{peaked} than those of the nonrandomized Schr\"odinger for large disorder $\lambda \geq 1$. As in the case for one dimension, localization is not apparent for small disorder ($\lambda=.1$) as seen in Figure \ref{fig:Anderson2D}. Moreover, as $N$ increases, for small disorder the eigenstates of both discrete Schr\"odinger and Anderson models seems to coincide. This does not necessarily mean that the localization is absent for this regime, but rather the localized states are harder to find for small amount of disorder and a larger amount of atoms have to be considered. In Figure \ref{fig:Anderson2Db}, localization is not clearly visible for even $\lambda=1$ in the factors calculated via the multilinear SVD decomposition \cite{HOSVD} while localization is detected in the plots in Figure \ref{fig:Anderson2D} when $\lambda=1$. The plots in Figure \ref{fig:Anderson2Db} are generated by applying the HOOI algorithm \cite{HOOI} to the Hamiltonian tensors (\ref{hamiltontensor1},\ref{hamiltontensor2}). %Furthermore, in the contour plots in Figure \ref{fig:Anderson3D}, the eigenvectors in two and three dimensions of the Anderson model are more \emph{peaked} than those of the nonrandomized Schr\"odinger.

Our numerical results provide some validation that these localizations exist for large disorder for dimension $d>1$ for sufficient amount of atoms. 

\begin{figure}[htp]
  \begin{center}
    \subfigure[$\lambda=10$]{\label{fig:Anderson3Da}\includegraphics[width = 52 mm]{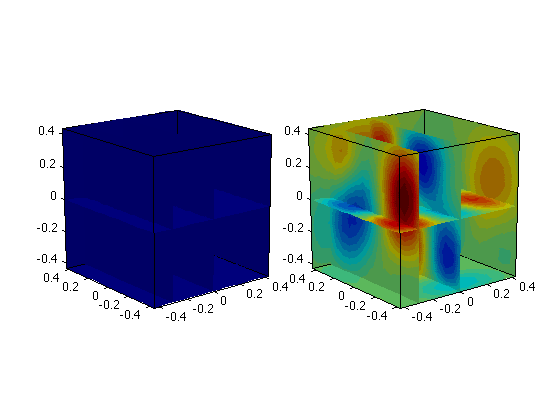}}
    \subfigure[$\lambda=1$]{\label{fig:Anderson3Da2}\includegraphics[width = 52 mm]{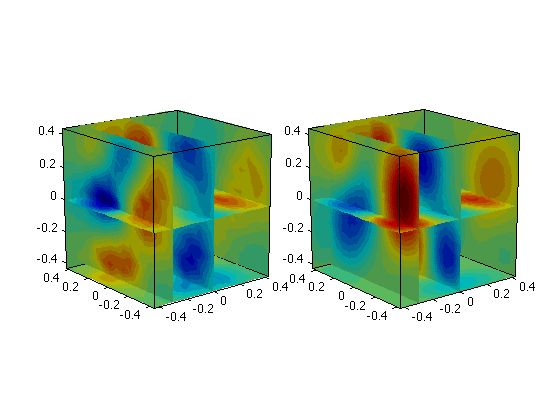}} 
    \subfigure[$\lambda=0.1$]{\label{fig:Anderson3Da3}\includegraphics[width = 52 mm]{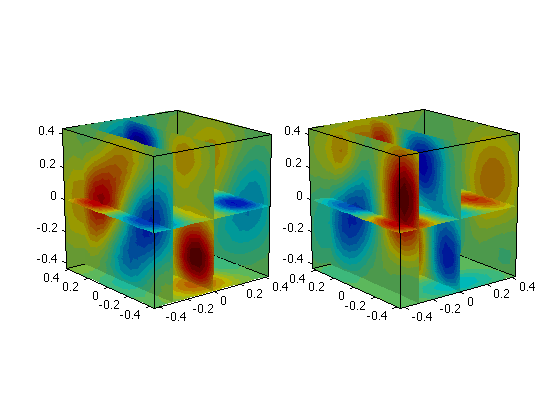}}
    \subfigure[$\lambda=10$]{\label{fig:Anderson3Db}\includegraphics[width = 52 mm]{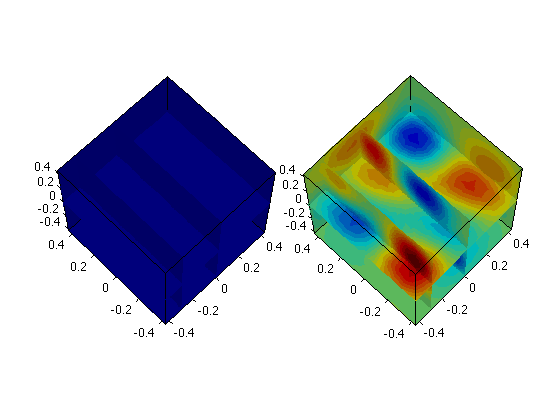}}
    \subfigure[$\lambda=1$]{\label{fig:Anderson3Db2}\includegraphics[width = 52 mm]{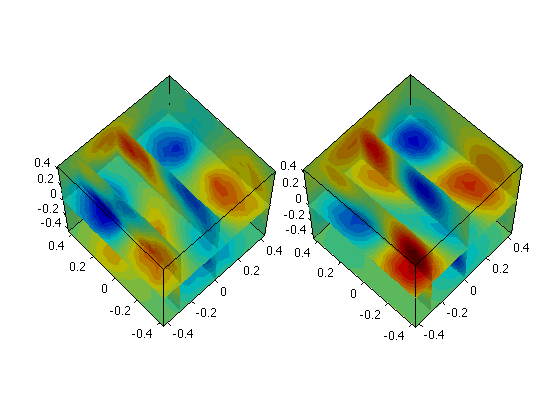}} 
    \subfigure[$\lambda=0.1$]{\label{fig:Anderson3Db3}\includegraphics[width = 52 mm]{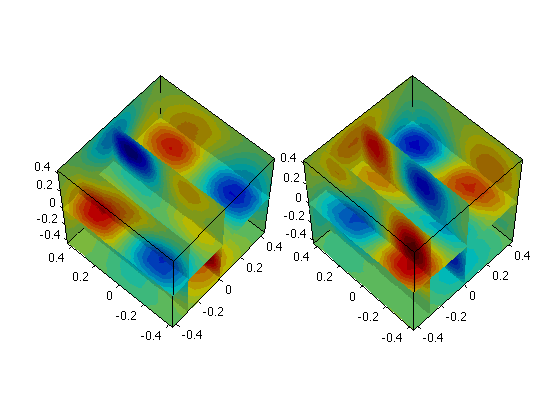}}
  \end{center}
  \caption{Two views (front (first row) and top (second row)) of the Three-dimensional Eigenvectors of the Anderson Model (left) and the Discrete Schr\"odinger Operator (right) for varying disorder.}
  \label{fig:Anderson3D}
\end{figure}

\section{Multilinear Least Squares}

Under the Einstein product rule, odd-order and nonhyper-rectangular tensors do not have inverses. In this section, we extend the concepts of pseudo-inversion for odd-order tensors and and nonhyper-rectangular tensors.

\subsection{Least-Squares}
The linear least-squares (LLS) method is a well-known method for data analysis.  Often the number of observations $\bold{b}$ exceed the number of unknown parameters $\bold{x}$ in LLS, forming an overdetermined system, e.g. 
\begin{eqnarray}\label{overdeter} 
\bold{A} \bold{x} =\bold{b} 
\end{eqnarray}
where $\bold{A} \in \mathbb{R}^{m \times n}$, $\bold{x} \in \mathbb{R}^n$, and $\bold{b} \in \mathbb{R}^m$ with $m > n$.  Through minimization of the residual, $\bold{r}=\bold{b}-\bold{Ax}$, the overdetermined system (\ref{overdeter}) can be solved. If the objective function being minimized over $\bold{x} \in \mathbb{R}^n$ is $\phi(\bold{x})=\Vert \bold{r}\Vert_{\ell_2}$, then this is the least-squares method. Thus, the solution obtained through LLS is the vector $\bold{x^*} \in \mathbb{R}^n$ minimizing $\phi(\bold{x})$; the vector $\bold{x}^*$ is called the \emph{least-squares} solution of the linear system (\ref{overdeter}).

Here are examples of overdetermined multilinear systems. 
\begin{itemize}
\item[(i)]
$\mathcal{A} \bullet_3 \bold{x} = \bold{B}$ where $\mathcal{A} \in \mathbb{R}^{I \times J \times K}$, $\bold{x} \in \mathbb{R}^K$ and $\bold{B} \in \mathbb{R}^{I \times J}$\label{mls1}
\item[(ii)]
$\mathcal{A} \ast \mathcal{X} = \mathcal{B}$ where  $\mathcal{A} \in \mathbb{R}^{I \times J \times R \times S}, \mathcal{X} \in \mathbb{R}^{R \times S \times K \times L}$ and $\mathcal{B} \in \mathbb{R}^{I \times J \times K \times L}$ 
\end{itemize}
For both cases, higher-order tensor inverses of $\mathcal{A}$ do not exist. The formulations,
\begin{eqnarray}
\min_{\bold{x}} \Vert  \mathcal{A} \bullet_3 \bold{x} - \bold{B} \Vert _F \mbox{~~~and~~~} \min_{\mathcal{X}} \Vert  \mathcal{A} \ast \mathcal{X} - \mathcal{B} \Vert_F,
\end{eqnarray}
are considered to find \emph{multilinear least-squares} solutions of systems. Note that the Frobenius norm, $\Vert \cdot \Vert_F$, is defined as $\Vert \mathcal{A} \Vert_F^2=\sum_{i_1i_2\ldots i_N} \vert  a_{i_1i_2\ldots,i_N} \vert^2$ for $\mathcal{A}^{I_1\times I_2 \times \ldots I_N}$.

\subsection{Normal equations}

\begin{definition}[Critical Point]
Let $\phi:\mathbb{R}^n \rightarrow \mathbb{R}$ be a continuously differentiable function. A critical point of $\phi$ is a point $\bar{\bold{x}} \in \mathbb{R}^n$ such that 
\[ \nabla \phi(\bar{\bold{x}})=\bold{0}.   \]
\end{definition}
Consider the multilinear system,
\begin{eqnarray} \label{mls}
\mathcal{A} \bullet_3 \bold{x} = \bold{B} 
\end{eqnarray}
where $\mathcal{A} \in \mathbb{R}^{I \times J \times K}$, $\bold{x} \in \mathbb{R}^K$, and $\bold{B} \in \mathbb{R}^{I \times J}$ and define
\begin{eqnarray}\label{objfunc1}
\phi_1(\bold{x})=\Vert  \mathcal{A} \bullet_3 \bold{x} - \bold{B} \Vert _F^2.
\end{eqnarray}

\begin{lemma} \label{lemmamls}
Any minimizer $\bar{\bold{x}} \in \mathbb{R}^K$ of $\phi_1$ satisfies the following system
\begin{eqnarray}\label{normaleqn1}
\mathcal{A}^T \ast_2 \mathcal{A} \bullet_3 \bold{x} = \mathcal{A}^T  \ast_2 \bold{B}.
\end{eqnarray}
\end{lemma}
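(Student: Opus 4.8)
The plan is to obtain (\ref{normaleqn1}) exactly as one derives the classical normal equations for linear least squares, namely from the first-order optimality condition. First I would observe that, written out in the $K$ coordinates of $\bold{x}$, the function $\phi_1$ in (\ref{objfunc1}) is a polynomial of degree two and hence is of class $C^{\infty}$ on the open set $\mathbb{R}^{K}$; consequently any minimizer $\bar{\bold{x}}$ is in particular a critical point in the sense just defined, i.e. $\nabla\phi_1(\bar{\bold{x}})=\bold{0}$. Since the lemma asserts only necessity, this is all that is needed, although one may add at no cost that $\phi_1$ is convex, so that here the critical points in fact coincide with the global minimizers.

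Next I would compute $\nabla\phi_1$ in coordinates. Using $(\mathcal{A}\bullet_3\bold{x})_{ij}=\sum_{k=1}^{K}a_{ijk}x_k$,
\begin{eqnarray*}
\phi_1(\bold{x}) &=& \sum_{i,j}\Big(\sum_{k} a_{ijk}x_k - b_{ij}\Big)^2 \\
&=& \sum_{i,j}\sum_{k,k'} a_{ijk}a_{ijk'}x_k x_{k'} \;-\; 2\sum_{i,j}\sum_{k} a_{ijk}b_{ij}x_k \;+\; \sum_{i,j} b_{ij}^2 ,
\end{eqnarray*}
and differentiating the last line with respect to $x_m$ and equating to zero gives, for each $m=1,\ldots,K$,
\[ \sum_{i,j}\sum_{k} a_{ijm}\,a_{ijk}\,x_k \;=\; \sum_{i,j} a_{ijm}\,b_{ij} . \]
The remaining task is to recognize the two sides of this scalar identity as the components of the tensor expressions in (\ref{normaleqn1}). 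With the transpose read through Definition \ref{tensortranspose} applied to the block splitting of the modes of $\mathcal{A}$ in which the pair $(i,j)$ is treated as one aggregated index and $k$ as the other, the contraction $\mathcal{A}^{T}\ast_2\mathcal{A}$ over the $(i,j)$-modes has components $\sum_{i,j}a_{ijm}a_{ijk}$, so the left-hand side above is $(\mathcal{A}^{T}\ast_2\mathcal{A}\bullet_3\bold{x})_m$ and the right-hand side is $(\mathcal{A}^{T}\ast_2\bold{B})_m$. Equivalently, and more transparently, one can argue abstractly: the assignment $\mathfrak{L}:\bold{x}\mapsto\mathcal{A}\bullet_3\bold{x}$ is a linear map $\mathbb{R}^{K}\to\mathbb{R}^{I\times J}$ whose adjoint with respect to the Frobenius inner products is $\bold{Y}\mapsto\mathcal{A}^{T}\ast_2\bold{Y}$, and the condition $\nabla\phi_1(\bar{\bold{x}})=\bold{0}$ says precisely that the residual $\bold{B}-\mathcal{A}\bullet_3\bar{\bold{x}}$ is annihilated by this adjoint, which is (\ref{normaleqn1}).

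I expect the main obstacle to be bookkeeping rather than analysis. Definition \ref{tensortranspose} is stated for even-order tensors, so for the third-order $\mathcal{A}$ one must first fix the convention that $\mathcal{A}^{T}$ transposes the $(i,j)$-versus-$k$ block structure, and then check that the index conventions of the Einstein product $\ast_2$ and of the mode-$3$ product $\bullet_3$ compose in such a way that the scalar identity displayed above really does sit at the positions claimed by (\ref{normaleqn1}). Once that identification is made explicit, the lemma follows immediately from the coordinate computation; no inequality or projection theorem is required, only the vanishing of the gradient at a minimizer.
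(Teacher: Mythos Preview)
Your proposal is correct and follows essentially the same route as the paper: expand $\phi_1$ as a quadratic in the coordinates of $\bold{x}$, differentiate, set the gradient to zero, and then identify the resulting scalar identities with the tensor expressions in (\ref{normaleqn1}) using the convention $(\mathcal{A}^T)_{kij}=(\mathcal{A})_{ijk}$. Your additional remarks on smoothness/convexity and the adjoint interpretation are not in the paper's proof but are harmless embellishments; the bookkeeping concern you raise about the third-order transpose is exactly what the paper handles by declaring that permutation convention in the proof and elaborating it in the subsequent subsection.
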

\begin{proof}
We expand the objective function, 
\begin{eqnarray*}
\phi_1(\bold{x})= \langle \mathcal{A} \bullet_3 \bold{x} - \bold{B}, \mathcal{A} \bullet_3 \bold{x} - \bold{B}   \rangle &=& \langle \mathcal{A} \bullet_3 \bold{x} ,\mathcal{A} \bullet_3 \bold{x}   \rangle -2 \langle \mathcal{A} \bullet_3 \bold{x}  , \bold{B}  \rangle 
+  \langle \bold{B},\bold{B} \rangle \\
&=& (\mathcal{A} \bullet_3 \bold{x} )^T (\mathcal{A} \bullet_3 \bold{x}) -2 \bold{B}^T( \mathcal{A} \bullet_3 \bold{x}) + \bold{B}^T \bold{B}.
\end{eqnarray*}
Then,
\begin{eqnarray}\label{normeq1}
\nabla \phi_1(\bold{x})=\frac{\partial}{\partial \bold{x}} \left [ (\mathcal{A} \bullet_3 \bold{x} )^T (\mathcal{A} \bullet_3 \bold{x}) \right ]&=&\frac{\partial}{\partial \bold{x}} \left [  \sum_{ij} \left ( \sum_{kl}   \bold{x}_k \mathcal{A}_{kij}\mathcal{A}_{ijl} \bold{x}_l    \right ) \right ]
=\frac{\partial}{\partial \bold{x}} \left [  \sum_{kl} \left ( \sum_{ij}   \bold{x}_k \mathcal{A}_{kij}\mathcal{A}_{ijl} \bold{x}_l    \right ) \right ]  \nonumber\\
&=&\frac{\partial}{\partial \bold{x}} \left [  \sum_{kl} \bold{x}_k (\mathcal{A}^T \ast \mathcal{A})_{kl} \bold{x}_l   \right ]
= 2 (\mathcal{A}^T \ast_2 \mathcal{A}) \bold{x}  \nonumber\\
&=&  2 \mathcal{A}^T \ast_2  \mathcal{A} \bullet_3 \bold{x}
\end{eqnarray}
and
\begin{eqnarray}\label{normeq2}
 2 \frac{\partial}{\partial \bold{x}} \left [ ( \mathcal{A} \bullet_3 \bold{x})^T\bold{B} \right ] &=&2\frac{\partial}{\partial \bold{x}} \left [  \sum_{ij} \left ( \sum_{k}  \bold{x}_k \mathcal{A}_{kij}  \bold{B}_{ij}    \right ) \right ] =2 \frac{\partial}{\partial \bold{x}} \left [  \sum_{k} \left ( \sum_{ij}    \bold{x}_k \mathcal{A}_{kij}\bold{B}_{ij}  \right ) \right ] \nonumber \\
 &=&2 \frac{\partial}{\partial \bold{x}} \left [  \sum_{k}   \bold{x}_k (\mathcal{A}^T \ast_2 \bold{B})_k   \right ]  \nonumber\\
 &=& 2 \mathcal{A}^T \ast_2 \bold{B}
  \end{eqnarray}
where $\mathcal{A}^T$, a permutation of $\mathcal{A}$ where $(\mathcal{A}^T)_{kij}=(\mathcal{A})_{ijk}$. Thus from (\ref{normeq1}-\ref{normeq2}),
\[ \frac{\partial \phi_1}{\partial \bold{x}}(\bold{x})= 2 \mathcal{A}^T \ast_2  \mathcal{A} \bullet_3 \bold{x}-2 \mathcal{A}^T \ast_2 \bold{B}.\] 
Clearly, the minimizer  $\bar{\bold{x}}$ of $\phi_1$ satisfies
\begin{eqnarray*}\label{normeq}
\mathcal{A}^T \ast_2  \mathcal{A} \bullet_3 \bold{x}= \mathcal{A}^T \ast_2 \bold{B}.
\end{eqnarray*}
Furthermore, the critical point is $\bar{\bold{x}}= (\mathcal{A}^T \ast_2  \mathcal{A})^{-1} \ast_2 \mathcal{A}^T \ast_2 \bold{B}$.
\end{proof}
For the problem
\[\mathcal{A} \ast_2 \mathcal{X} = \mathcal{B}\]
where  $\mathcal{A} \in \mathbb{R}^{I \times J \times R \times S}, \mathcal{X} \in \mathbb{R}^{R \times S \times K \times L}$ and $\mathcal{B} \in \mathbb{R}^{I \times J \times K \times L}$ and the objective function,
\begin{eqnarray}\label{objfunc2}
\phi_2(\bold{x})=\Vert  \mathcal{A} \ast_2 \mathcal{X} - \mathcal{B} \Vert _F^2,
\end{eqnarray}
we have the following lemma.
\begin{lemma} \label{lemmamls2}
Any minimizer $\bar{\mathcal{X}} \in \mathbb{R}^{R \times S \times K \times L}$ of $\phi_2$ satisfies the following system
\begin{eqnarray}\label{normaleqn2}
\mathcal{A}^T \ast_2 \mathcal{A} \ast_2 \mathcal{X}= \mathcal{A}^T \ast_2 \mathcal{B}
\end{eqnarray}
where $\mathcal{A}^T \in \mathbb{R}^{R \times S \times I \times J}$ denotes the transpose of $\mathcal{A} \in \mathbb{R}^{I \times J \times R \times S}$. Moreover, the critical point of $\phi_2$ is $\bar{\mathcal{X}}=(\mathcal{A}^T \ast_2 \mathcal{A})^{-1} \ast_2 \mathcal{A}^T \ast_2 \mathcal{B}$.
\end{lemma}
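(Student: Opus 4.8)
The plan is to mimic the argument of Lemma \ref{lemmamls} almost verbatim, the only new features being that the unknown is now a fourth-order tensor rather than a vector and that every contraction is the Einstein product $\ast_2$. First I would record the ``adjointness'' identity for the Frobenius inner product $\langle\cdot,\cdot\rangle$: for conformable tensors, $\langle \mathcal{A}\ast_2\mathcal{X},\mathcal{Y}\rangle = \langle \mathcal{X},\mathcal{A}^T\ast_2\mathcal{Y}\rangle$, which follows by writing both sides in coordinates and using $(\mathcal{A}^T)_{rsij}=\mathcal{A}_{ijrs}$ from Definition \ref{tensortranspose}. Applying it with $\mathcal{Y}=\mathcal{A}\ast_2\mathcal{X}$ and with $\mathcal{Y}=\mathcal{B}$, together with associativity of $\ast_2$ on the triple $\mathcal{A}^T\ast_2\mathcal{A}\ast_2\mathcal{X}$ (which is conformable precisely because $\mathcal{A}^T$ carries the shape $\mathbb{R}^{R\times S\times I\times J}$), yields
\[
\phi_2(\mathcal{X}) = \langle \mathcal{X},\, \mathcal{A}^T\ast_2\mathcal{A}\ast_2\mathcal{X}\rangle - 2\langle \mathcal{X},\, \mathcal{A}^T\ast_2\mathcal{B}\rangle + \langle\mathcal{B},\mathcal{B}\rangle,
\]
a quadratic in the $RSKL$ scalar entries of $\mathcal{X}$ whose ``Hessian'' is $2\,\mathcal{A}^T\ast_2\mathcal{A}$.

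Next I would differentiate entrywise, exactly as in (\ref{normeq1})--(\ref{normeq2}), obtaining $\partial\phi_2/\partial\mathcal{X} = 2\,\mathcal{A}^T\ast_2\mathcal{A}\ast_2\mathcal{X} - 2\,\mathcal{A}^T\ast_2\mathcal{B}$; setting this to zero gives the normal equations (\ref{normaleqn2}). To pass from ``critical point'' to ``minimizer'' I would observe that $\mathcal{A}^T\ast_2\mathcal{A}$ is positive semidefinite for $\langle\cdot,\cdot\rangle$, since $\langle \mathcal{X},\mathcal{A}^T\ast_2\mathcal{A}\ast_2\mathcal{X}\rangle = \|\mathcal{A}\ast_2\mathcal{X}\|_F^2\ge 0$, so $\phi_2$ is convex and the critical-point equation (\ref{normaleqn2}) characterizes global minimizers. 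Finally, for the closed form $\bar{\mathcal{X}}=(\mathcal{A}^T\ast_2\mathcal{A})^{-1}\ast_2\mathcal{A}^T\ast_2\mathcal{B}$ one needs $\mathcal{A}^T\ast_2\mathcal{A}$ invertible under $\ast_2$; by Lemma \ref{lemmaf} and Theorem \ref{c'estgroup} this is equivalent to $\det\big(f(\mathcal{A}^T\ast_2\mathcal{A})\big)\neq 0$, i.e. (since $f(\mathcal{A}^T\ast_2\mathcal{A})=f(\mathcal{A})^Tf(\mathcal{A})$) to $f(\mathcal{A})\in\mathbb{R}^{IJ\times RS}$ having full column rank $RS$; under that hypothesis one solves (\ref{normaleqn2}) by left-multiplying with the inverse in the group $(\widetilde{\mathbb{T}},\ast_2)$.

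An even shorter route, which I would mention as an alternative, is to push the problem through the transformation $f$: since $f(\mathcal{A}\ast_2\mathcal{X})=f(\mathcal{A})f(\mathcal{X})$ and $f(\mathcal{A}^T)=f(\mathcal{A})^T$ (both immediate from the coordinate formulas), minimizing $\phi_2$ is literally the matrix least-squares problem $\min_X\|f(\mathcal{A})X-f(\mathcal{B})\|_F^2$, whose normal equations $f(\mathcal{A})^Tf(\mathcal{A})X=f(\mathcal{A})^Tf(\mathcal{B})$ are classical; applying $f^{-1}$ and the homomorphism property (\ref{inversehomor}) returns (\ref{normaleqn2}) and, in the full-rank case, the stated $\bar{\mathcal{X}}$.

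I expect no genuine obstacle: the computation is the matrix case in tensor clothing. The two points that deserve a sentence of care are (a) keeping mode sizes conformable so that $\ast_2$ is associative on the relevant triple products, which is exactly why $\mathcal{A}^T$ must be taken in $\mathbb{R}^{R\times S\times I\times J}$, and (b) flagging that the expression $(\mathcal{A}^T\ast_2\mathcal{A})^{-1}$ tacitly assumes $f(\mathcal{A})$ has full column rank; absent that, $(\mathcal{A}^T\ast_2\mathcal{A})^{-1}\ast_2\mathcal{A}^T$ must be replaced by a Moore--Penrose-type tensor pseudo-inverse, which is the subject of the remainder of this section.
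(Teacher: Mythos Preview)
Your proposal is correct and follows precisely the approach the paper intends: the paper omits the proof of Lemma~\ref{lemmamls2} entirely, stating only that it is similar to that of Lemma~\ref{lemmamls}, which is exactly what you do. Your additional remarks on the adjointness identity, the convexity via positive semidefiniteness of $\mathcal{A}^T\ast_2\mathcal{A}$, the full-column-rank hypothesis needed for $(\mathcal{A}^T\ast_2\mathcal{A})^{-1}$ to exist, and the alternative route through the isomorphism $f$ are all sound and in fact more careful than what the paper provides.
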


\begin{remark}
We omit the proof for Lemma \ref{lemmamls2} since it is similar to that of Lemma \ref{lemmamls}.  Both critical points,  $\bar{\bold{x}}= (\mathcal{A}^T \ast_2  \mathcal{A})^{-1} \ast_2 \mathcal{A}^T \ast_2 \bold{B}$ and $\bar{\mathcal{X}}=(\mathcal{A}^T \ast_2 \mathcal{A})^{-1} \ast_2 \mathcal{A}^T \ast_2 \mathcal{B}$ are unique minimizers for (\ref{objfunc1}) and (\ref{objfunc2}), respectively, since $\phi_1$ and $\phi_2$ are quadratic functions. Equations (\ref{normaleqn1}) and (\ref{normaleqn2}) are called the high-order normal equations.
\end{remark}

\subsection{Transposes and permutations}

From Definition \ref{tensortranspose}, the transpose of $\mathcal{A} \in \mathbb{R}^{I \times J \times R \times S}$ in (\ref{normaleqn2}) is easily obtained. Since the definition only holds for even-order tensors, we extend the notion of transposition to third (odd) order tensors.  Recall in Lemma \ref{lemmamls}, we have denoted a permutation of $\mathcal{A} \in \mathbb{R}^{I \times J \times K}$ as $\mathcal{A}^T \in \mathbb{R}^{K \times I \times J}$. The transpose of a third order tensor $\mathcal{A} \in \mathbb{R}^{I \times J \times K}$ is a permutation since third order tensors can be viewed as fourth order tensors with one mode in one-dimension.  For example, if $\mathcal{B}$ is a permutation of $\mathcal{A} \in \mathbb{R}^{I \times J \times K \times L}$ with $L=1$ , then $b_{klij}=a_{ijkl}$ which is $b_{ijk1}=a_{k1ij} \Leftrightarrow b_{ijk}=a_{kij}.$ Thus we denote $\mathcal{B}=\mathcal{A}^T$ where $\mathcal{B} \in \mathbb{R}^{K \times I \times J}$.

Unlike in the matrix case where $(\bold{A}^T)^T=\bold{A}$, for third order tensors we have the following property.  
\begin{lemma}[Property of third order tensor transpose]
Let $\mathcal{A} \in \mathbb{R}^{I \times J \times K}$ and $\rho$ be a permutation on the index set $\{ijk\}$. Then $((\mathcal{A}^T)^T)^T=\mathcal{A}$.
\end{lemma}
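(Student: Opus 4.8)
The plan is to make the third-order transpose completely explicit as a cyclic relabeling of the three index slots, and then to compose that relabeling with itself three times. Recall the convention fixed just before the statement: embedding $\mathcal{A}\in\mathbb{R}^{I\times J\times K}$ as a fourth-order tensor with a trailing singleton mode and applying Definition \ref{tensortranspose} gives $\mathcal{A}^{T}\in\mathbb{R}^{K\times I\times J}$ with $(\mathcal{A}^{T})_{kij}=a_{ijk}$. Equivalently, writing $T$ for the operation $\mathcal{X}\mapsto\mathcal{X}^{T}$, we have the entrywise rule $(\mathcal{X}^{T})_{abc}=\mathcal{X}_{bca}$; this is exactly the relabeling induced by the $3$-cycle $\rho\in S_{3}$ that the statement refers to.

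First I would iterate this rule. Setting $\mathcal{B}=\mathcal{A}^{T}$ we get $\mathcal{B}\in\mathbb{R}^{K\times I\times J}$ with $b_{abc}=a_{bca}$; setting $\mathcal{C}=\mathcal{B}^{T}$ and substituting gives $\mathcal{C}\in\mathbb{R}^{J\times K\times I}$ with $c_{abc}=b_{bca}=a_{cab}$; setting $\mathcal{D}=\mathcal{C}^{T}$ and substituting once more gives $\mathcal{D}\in\mathbb{R}^{I\times J\times K}$ with $d_{abc}=c_{bca}=a_{abc}$. Thus $\mathcal{D}$ has the same shape as $\mathcal{A}$ and agrees with it entry by entry, so $\mathcal{D}=\mathcal{A}$, i.e. $((\mathcal{A}^{T})^{T})^{T}=\mathcal{A}$. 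Equivalently and more conceptually: $T$ acts on the three index positions by the permutation $\rho$, so $T^{m}$ acts by $\rho^{m}$; since $\rho$ is a $3$-cycle it has order $3$, whence $\rho^{3}=\mathrm{id}$ and $T^{3}$ is the identity on $\mathbb{R}^{I\times J\times K}$.

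There is no real obstacle here beyond keeping the index bookkeeping straight, and the argument also explains the contrast with the matrix case noted in the statement: for matrices the analogous permutation is the transposition $(1\,2)$, which has order $2$, so $(\mathbf{A}^{T})^{T}=\mathbf{A}$, whereas for third-order tensors the relevant permutation has order $3$, so two transposes do not recover $\mathcal{A}$ but three do. The one convention-dependent point worth stating explicitly is the placement of the trailing singleton mode in the embedding $\mathbb{R}^{I\times J\times K}\hookrightarrow\mathbb{R}^{I\times J\times K\times 1}$: a different placement would replace $\rho$ by $\rho^{-1}$, but this changes nothing since $\rho^{-1}$ is again a $3$-cycle of order $3$.
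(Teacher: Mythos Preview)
Your proof is correct and follows essentially the same approach as the paper: both identify the third-order transpose with a $3$-cycle on the index slots and then track the indices through three successive applications to recover the original entries. Your presentation is in fact cleaner and more explicit than the paper's, and your added remark that $T$ acts by a $3$-cycle of order $3$ (versus order $2$ in the matrix case) is exactly the structural point the paper is gesturing at.
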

\begin{proof}
For the index set $\{ijk\}$, there are two cyclic permutations: $\rho_1(ijk)=jki,~  \rho_2(jki)=kij,~  \rho_3(kij)=ijk$ and $\bar{\rho_1}(ikj)=kji,~  \bar{\rho_2}(kji)=jik,~  \bar{\rho_3}(jik)=ikj$. It follows that $(((\mathcal{A}^T)^T)^T)_{ijk}=(\mathcal{D})_{ijk}=(\mathcal{D})_{\rho_3(kij)} \Longrightarrow 
((\mathcal{A}^T)^T)_{kij}=(\mathcal{D})_{kij}=(\mathcal{D})_{\rho_2(jki)}  \Longrightarrow 
(\mathcal{A}^T)_{jki}=(\mathcal{D})_{jki}=(\mathcal{D})_{\rho_1(ijk)} \Longrightarrow 
(\mathcal{A})_{ijk}=(\mathcal{D})_{ijk}$.
\end{proof}
There are six permutations for a third order tensor, although there are two cyclic permutations.  For an $N$th order tensor, the number of tensor transposes is dependent on the number and length of cyclic permutations on the index set $\{i_1 i_2 \ldots i_N\}$. Table $2$ lists all the possible multilinear least squares problems for third order tensors and their corresponding tensor transposes.
\begin{table}[h]
\begin{center}
\begin{tabular}{|c|c|c|c|}
\hline 
$\mathcal{A}$ & $\bold{x}$ & $\bold{B}$ &  $\mathcal{A}^T$\\
\hline
$\mathbb{R}^{I \times J \times K}$ & $\mathbb{R}^K$ & $\mathbb{R}^{I \times J}$ & $\mathbb{R}^{K \times I \times J}$\\
\hline
$\mathbb{R}^{J \times K \times I}$ & $\mathbb{R}^I$ & $\mathbb{R}^{J \times K}$ & $\mathbb{R}^{I \times J \times K}$\\
\hline
$\mathbb{R}^{K \times I \times J}$ & $\mathbb{R}^J$ & $\mathbb{R}^{K \times I}$ & $\mathbb{R}^{J \times K \times I}$\\
\hline \hline
$\mathbb{R}^{I \times K \times J}$ & $\mathbb{R}^J$ & $\mathbb{R}^{K \times I}$ & $\mathbb{R}^{J \times I \times K}$\\
\hline
$\mathbb{R}^{K \times J \times I}$ & $\mathbb{R}^I$ & $\mathbb{R}^{K \times J}$ & $\mathbb{R}^{I \times K \times J}$\\
\hline
$\mathbb{R}^{J \times I \times K}$ & $\mathbb{R}^K$ & $\mathbb{R}^{J \times I}$ & $\mathbb{R}^{K \times J \times I}$\\
\hline
\end{tabular}
	\caption{Dimensions for Higher-Order Normal Equations for Third Order Tensors}
\end{center}
\end{table}

%\section{Summary and Outlook}
%We have shown that even order hyper-rectangular tensors are invertible by a transformation to the general linear group equipped with the Einstein product.  While odd order tensors are not invertible, we extended the notion of pseudo-inversion for odd tensors. New alternative decompositions arise from these isomorphic properties as well as new applications. Tensor-based numerical methods for solving PDEs and eigenvalue problems in quantum statistical mechanics are explored. Solving eigenvalue problems in quantum statistical mechanics require tensor based techniques and computing solution of PDEs in high dimensions need efficient and accurate methods. We plan to further develop the proposed methods by exploiting the underlying tensor structures of the problems. Building efficient tensor operations and developing new tensor decompositions to reduce high dimensionality are ways to improve convergence rates of these iterative methods. 

\subsection*{Acknowledgments}
C.N. and N.L. are both in part supported by National Science Foundation DMS-0915100. C.N. would like to thank Shannon Starr for some fruitful discussions on the quantum models.

\subsection*{Appendix: Proof of Theorem \ref{c'estgroup}}
%\begin{thm} \label{c'estgroup}
%Suppose $(\mathbb{M},\cdot)$ is a group. Let $f: \mathbb{T} \rightarrow \mathbb{M}$ be any bijection.  Then we can define a group structure on $\mathbb{T}$ by defining
%\[  \mathcal{A} \ast_2 \mathcal{B} =f^{-1}[f(\mathcal{A}) \cdot f(\mathcal{B})] \]
%for all $\mathcal{A}, \mathcal{B} \in \mathbb{T}$. In other words, the binary operation $\ast_2$ satisfies the group axioms. Moreover, the mapping $f$ is an isomorphism.
%\end{thm}

\begin{proof}
Here we prove the main theorem by checking each axioms ($A1-A3$) hold in Definition $3.3$. 
\begin{itemize}
\item[$(A1)$]
Show that the binary operation $\ast_2$ is associative.  

Since we know that $f$ is a bijective map with the property that $f(\mathcal{A} \ast_2 \mathcal{B})= f(\mathcal{A}) \cdot f(\mathcal{B})$. We will show $f^{-1}(\bold{A} \cdot \bold{B})= f^{-1}(\bold{A}) \ast_2 f^{-1}(\bold{B})$, for $\bold{A}, \bold{B} \in \mathbb{M}$.

Let $\mathcal{A}, \mathcal{B}, \mathcal{C} \in \mathbb{T}$ and $\bold{A}, \bold{B}, \bold{C} \in \mathbb{M}$ where $f(\bold{A})=\mathcal{A}$,  $f(\bold{B})=\mathcal{B}$ and $f(\bold{C})=\mathcal{C}$. Then,
\begin{eqnarray*}
(\mathcal{A}\ast_2 \mathcal{B}) \ast_2 \mathcal{C}
& = & f^{-1}(\bold{A}) \ast_2 f^{-1}(\bold{B}) \ast_2 f^{-1}(\bold{C})=f^{-1}(\bold{A} \cdot \bold{B} \cdot \bold{C})=f^{-1}(\bold{A} \cdot (\bold{B} \cdot \bold{C})) \\
 & = & f^{-1}(\bold{A}) \ast_2 f^{-1}(\bold{B} \cdot \bold{C})=\mathcal{A} \ast_2(f^{-1}(\bold{B}) \ast_2 f^{-1}(\bold{C}))=\mathcal{A} \ast_2 (\mathcal{B} \ast_2 \mathcal{C})
\end{eqnarray*}

Therefore, $(\mathcal{A}\ast_2 \mathcal{B}) \ast_2 \mathcal{C} = \mathcal{A} \ast_2 (\mathcal{B} \ast_2 \mathcal{C})$.

\item[$(A2)$]
Show that there is an identity element for $\ast_2$ on $\mathbb{T}$. 

Since $\bold{I}^{I_{1}I_{2} \times I_{1}I_{2}} \in \mathbb{M}$ is the identity element in the group. Note that we will suppress the superscript of $\bold{I}$ in the calculation below. Then we claim that $f^{-1}({\bold{I}})$ is the identity element for $\ast_2$ on $\mathbb{T}$.

For every element $\mathcal{A} \in \mathbb{T}$, there exists a matrix $A \in \mathbb{M}$ so that $f^{-1}(\bold{A})=\mathcal{A}$. So, we get
$$\mathcal{A} \ast_2 f^{-1}(\bold{I})=f^{-1}(\bold{A}) \ast_2 f^{-1}(\bold{I})=f^{-1}(\bold{A} \cdot \bold{I})=f^{-1}(\bold{A})=\mathcal{A}$$
Similarly,
$$f^{-1}(\bold{I}) \ast_2 \mathcal{A}=f^{-1}(\bold{I}) \ast_2 f^{-1}(\bold{A})=f^{-1}(\bold{I} \cdot \bold{A})=f^{-1}(\bold{A})=\mathcal{A}$$
Therefore, $\mathcal{A} \ast_2 f^{-1}(\bold{I})=f^{-1}(\bold{I}) \ast_2 \mathcal{A} =\mathcal{A}$.

Define the tensor $\mathscr{E}$ as follows
\begin{eqnarray*}
(\mathscr{E})_{i_1 i_2 j_1 j_2}=\delta_{i_1 j_1}\delta_{i_2 j_2}
\end{eqnarray*}
where
\begin{eqnarray*}
\delta_{lk}=
\begin{cases}
$1,$ & \mbox{$l = k$} \\
$0,$ & \mbox{$l \neq k$}
\end{cases}
\end{eqnarray*}
We claim that $\mathscr{E}=f^{-1}(\bold{I})$. By direct calculations, we have 
$$(\mathscr{E} \ast_2 \mathcal{A})_{i_1 i_2 j_1 j_2} = \sum_{u,v} \epsilon_{i_1 i_2 u v}a_{u v j_1 j_2} = \epsilon_{i_1 i_2 i_1 i_2}a_{i_1 i_2 j_1 j_2}= \delta_{i_1 i_1}\delta_{i_2 i_2}a_{i_1 i_2 j_1 j_2}= a_{i_1 i_2 j_1 j_2}= \mathcal{A}_{i_1 i_2 j_1 j_2}$$
and
$$(\mathcal{A} \ast_2 \mathscr{E})_{i_1 i_2 j_1 j_2}= \sum_{u,v}a_{i_1 i_2 u v}\epsilon_{u v j_1 j_2} = a_{i_1 i_2 j_1 j_2}\epsilon_{j_1 i_2 j_1 j_2}=a_{i_1 i_2 j_1 j_2} \delta_{j_1 j_1}\delta_{j_2 j_2}= a_{i_1 i_2 j_1 j_2}= \mathcal{A}_{i_1 i_2 j_1 j_2}.$$

Thus $\mathscr{E} \ast_2 \mathcal{A} = \mathcal{A} \ast_2 \mathscr{E}=\mathcal{A}$, for $\forall \mathcal{A} \in \mathbb{T}$. Therefore $\mathscr{E}_{i_1 i_2 j_1 j_2}=\delta_{i_1 j_1}\delta_{i_2 j_2}$ is the identity element for $\ast_2$ on $\mathbb{T}$.

Finally, we know that $f^{-1}(\bold{I}^{I_{1}I_{2}\times I_{1}I_{2}})=\mathscr{E}$.

\item[$(A3)$]
Show that for each $\mathcal{A} \in \mathbb{T}$, there exists an inverse
$\widetilde{\mathcal{A}}$ such that $\widetilde{\mathcal{A}} \ast_2 \mathcal{A}= \mathcal{A} \ast_2 \widetilde{\mathcal{A}} = \mathscr{E}$.

 We define $\widetilde{\mathcal{A}}= f^{-1}\{[f(\mathcal{A})]^{-1}\}$ since $f(\mathcal{A}) \in \mathbb{M}$ and $f^{-1}$ is a bijection map from Lemma (\ref{lemmaf}). Then,
 $$f(\widetilde{\mathcal{A}} \ast_2 \mathcal{A})=f(\widetilde{\mathcal{A}}) \cdot f(\mathcal{A})=[f(\mathcal{A})]^{-1} \cdot f(\mathcal{A})=\bold{I}^{I_1 I_2 \times I_1 I_2}$$
 
 From Lemma \ref{lemmaf} and since $f(\mathscr{E})=\bold{I}^{I_1 I_2 \times I_1 I_2}$, we obtain $\widetilde{\mathcal{A}} \ast_2 \mathcal{A}=\mathscr{E}.$
 
 Similarly, we can get $ \mathcal{A} \ast_2 \widetilde{\mathcal{A}} = \mathscr{E}$. 
 
 It follows that  for each $\mathcal{A} \in \mathbb{T}$, there exists an inverse
$\widetilde{\mathcal{A}}$ such that $\widetilde{\mathcal{A}} \ast_2 \mathcal{A}= \mathcal{A} \ast_2 \widetilde{\mathcal{A}} = \mathscr{E}.$
 
 \end{itemize}
Therefore, the ordered pair $(\mathbb{T},\ast_2)$ is a group where the operation $\ast_2$ is defined in (\ref{eins1}). In addition, the transformation $f:\mathbb{T} \rightarrow \mathbb{M}$ (\ref{transf1}) is a bijective mapping between groups. Hence, $f$ is an isomorphism.
\end{proof}


\begin{thebibliography}{10}

\bibitem{Anderson}
P.W.~Anderson.
\newblock Absence of Diffusion in Certain Random Lattices.
\newblock {\it Physical Review}, {\bf 109} 5 (1958), pp.1492-1505.

\bibitem{Bai}
Z.~Bai, W.~Chen, R.~Scalettar, I.~Yamazaki.
\newblock {Numerical Methods for Quantum Monte Carlo Simulations of the Hubbard Model,} 
\newblock in {\it Multi-Scale Phenomena in Complex Fluids,} T.Y.~Hou, C.~Liu and J.-G.~Liu, eds., Higher Education Press, China, pp. 1-114, 2009.

\bibitem{BM1}
 G.~Beylkin and M.J.~ Mohlenkamp.
\newblock {\it Numerical operator calculus in higher dimensions.}
\newblock Proceedings of the National Academy of Sciences, {\bf 99} (2002), pp. 10246-10251.

\bibitem{BM2}
G.~Beylkin and M.J.~Mohlenkamp.
\newblock {\it Algorithms for numerical analysis in high dimensions.}
\newblock SIAM Journal on Scientific Computing, {\bf 26} (2005), pp. 2133-2159.

\bibitem{Bramley}
R.~Bramley and V. Menkov.
\newblock {\it Low rank off-diagonal block preconditioners for solving sparse linear systems on parallel computers,}
\newblock Tech. Rep. 446, Department of Computer Science, Indiana University, Bloomington, 1996.

\bibitem{Carmona}
R.~Carmona, A.~Klein and F.~Martinelli. 
\newblock {\it Anderson localization for Bernoulli and other singular potentials,} 
\newblock Comm. Math. Phys. {\bf 108} (1) (1987), pp.41-66.

\bibitem{CarolChang}
J.D.~Carroll and J.J.~Chang. 
\newblock {\it Analysis of individual differences in multidimensional scaling via an N-way generalization of `Eckart-Young' decomposition,}
\newblock Psychometrika, {\bf 35} (1970),  283-319.


\bibitem{Cohn}
H.~Cohn, R.~Kleinberg, B.~Szegedy, and C.~Umans.
\newblock Group-theoretic algorithms for matrix multiplication.
\newblock {\it Proceedings of the 46th Annual Symposium on Foundations of Computer Science, 2005, pp. 379-388. }

\bibitem{Comon}
P.~Comon.
\newblock {\em Tensor decompositions: State of the art and applications,}
\newblock in Mathematics in Signal Processing V, J.G.~ McWhirter and I.K.~Proudler, eds., Oxford University Press, 2001, pp. 1-24.

\bibitem{LekHeng}
P.~Comon, G.~Golub, L.-H.~Lim and B.~Mourrain.
\newblock {\it Symmetric tensors and symmetric tensor rank,}
\newblock SIMAX {\bf 30} 3, (2008), pp. 1254-1279.

\bibitem{Coppersmith}
D.~Coppersmith and S.~Winograd.
\newblock {\it Matrix multiplication via arithmetic progressions,}
\newblock Journal of Symbolic Computation, {\bf 9} (3) (1990), pp.251-280.

\bibitem{LekHeng2}
V. Da Silva and L.-H.~Lim.
\newblock {\it Tensor rank and the ill-posedness of the best low-rank approximation problem,}
\newblock SIMAX, {\bf 30} (3) (2008), pp. 1084-1127.

\bibitem{DeLathauwer}
L.~De Lathauwer.
\newblock {A survey of tensor methods,}
\newblock {\it ISCAS,} Taipei, 2009.

\bibitem{HOSVD}
L.~De Lathauwer, B.~De Moor, and J.~Vandewalle. 
\newblock A multilinear singular value decomposition.
\newblock {\it SIAM Journal on Matrix Analysis and Applications} {\bf 21} (2000), pp.1253-1278.

\bibitem{HOOI}
L.~De Lathauwer, B.~De Moor and J.~Vandewalle.
\newblock {\it On the Best Rank-1 and Rank-(R1,R2,...,RN) Approximation of Higher-Order Tensors,}
\newblock SIMAX {\bf 21} 4 (2000), pp. 1324--1342.

\bibitem{DeLatCasCar}
L.~De Lathauwer, J.~Castaing, and J.-F.~Cardoso. 
\newblock Fourth-Order Cumulant-Based Blind Identification of Underdetermined Mixtures, 
\newblock {\it IEEE Transactions on Signal Processing,} {\bf 55} (2007) 6, pp. 2965-2973.

\bibitem{Demmel}
J.~Demmel. 
\newblock {\em Applied Numerical Linear Algebra,}
\newblock SIAM, 1997.

\bibitem{DemmelNotes}
J.~Demmel.
\newblock {\it Lecture Notes on Cache Blocking,}
\newblock Computer Science 170, Spring 2007, UC Berkeley.

\bibitem{Doostan}
\newblock A.~Doostan, G.~Iaccarino, and N.~Etemadi.
\newblock {\it A least-squares approximation of high-dimensional uncertain systems,}
\newblock in Annual Research Briefs, Center for Turbulence Research, Stanford University, 2007, pp.121-132.

\bibitem{Einstein}
A.~Einstein.
\newblock The Foundation of the General Theory of Relativity.
\newblock In A.J.~Kox, M.J.~Klein, R.~Schulmann, eds,  {\it The Collected Papers of Albert Einstein,} {\bf 6}, pp. 146-200, Princeton University Press, 2007.

\bibitem{Erdos}
L.~Erd\"os, M.~Salmhofer and H.-T.~Yau,
\newblock Towards the Quantum Brownian Motion,
\newblock in {\it Mathematical Physics of Quantum Mechanics,} J.~Asch and A.~Joye, eds., Springer Lecture Notes in Physics 690, (2006) pp. 233-258.

\bibitem{K1}
W.~Hackbusch and B.N.~Khoromskij.
\newblock {\it Tensor-product approximation to operators and functions in high dimensions,}
\newblock Journal of Complexity, {\bf 23} (2007), pp. 697-714.

\bibitem{K2}
W.~Hackbusch, B.N.~Khoromskij, and E.E.~Tyrtyshnikov.
\newblock {\it Hierarchical kronecker tensor-product approximations,}
\newblock Journal of Numerical Mathematics, {\bf 13} (2005), pp. 119-156.

\bibitem{Harshman}
R.A.~Harshman. 
\newblock Foundations of the PARAFAC procedure: Models and conditions for an "explanatory" multi-modal factor analysis.
\newblock {\it UCLA working papers in phonetics}, {\bf 16} (1970), 1-84.

%\bibitem{HornJohnson}
%R.A.~Horn and C.A.~Johnson.
%\newblock {\em Matrix Analysis,}
%\newblock  Cambridge University Press, 1985.

\bibitem{Dirk}
D.~Hundertmark.
\newblock {A short introduction to Anderson localization,}
\newblock in {\it Analysis and stochastics of growth processes and interface models,} P.~M\"orters, R.~Penrose, H.~Schwetlick and J.~Zimmer, eds., pp.194-218, Oxford Univ. Press, Oxford, 2008.

\bibitem{Hitch1}
F.L.~ Hitchcock. 
\newblock The expression of a tensor or a polyadic as a sum of products.
\newblock {\it Journal of Mathematics and Physics}, {\bf 6} (1927), 164-189.

\bibitem{Hitch2}
F.L.~ Hitchcock. 
\newblock Multilple invariants and generalized rank of a p-way matrix or tensor, 
\newblock {\it Journal of Mathematics and Physics}, {\bf 7} (1927), 39-79.

\bibitem{Khoromskij}
B.N.~Khoromskij.
\newblock {\em Tensor-structured Numerical Methods in Scientific Computing: Survey on Recent Advances.}
\newblock Preprint 21/2010, MPI MIS, Leipzig 2010.

\bibitem{Kolda}
T.~Kolda and B.W.~Bader.
\newblock {\it Tensor decompositions and applications,}
\newblock SIREV, {\bf 51} (3), (2009), pp. 455-500.

\bibitem{Kolda2}
T.~Kolda.
\newblock {\it Orthogonal tensor decompositions,}
\newblock SIMAX, {\bf 23} (2001), pp. 243-255.

\bibitem{Kirsch}
W.~Kirsch.
\newblock {\em An Invitation to Random Schr\"odinger Operators.}
\newblock Prepint.

\bibitem{Kruskal}
J.B.~Kruskal.
\newblock {\it Three-way arrays: rank and uniquenss of trilinear decompositions with applications to arithmetic complexity and statistics,}
\newblock Linear Algebra and its Applications, {\bf 18} (1977), pp. 95-138.



\bibitem{KuSou}
H.~Kunz and B. Souillard.
\newblock {\it Sur le spectre des op\'erateurs aux differ\'ences finies al\'eatoires,}
\newblock Comm. Math. Phys. {\bf 78} (2) (1980), 201-246.

\bibitem{LaiRubinKrempl}
W.M.~Lai, D.~Rubin and E.~Krempl.
\newblock {\em Introduction to Continuum Mechanics,} 
\newblock Butterworth-Heinemann, 2009.

\bibitem{Pere}
E.~Per\'e-Trepat, E.~Kim, P.~Paatero, P.K.~Hopke.
\newblock {\it Source apportionment of time and size resolved ambient particulate matter measured with
a rotating DRUM impactor,}
\newblock Atmospheric Environment, {\bf 41} (2007), pp. 5921-5933.

\bibitem{VanLoan}
S.~Ragnarsson and C.~Van~Loan.
\newblock Block Tensor Unfoloding,
\newblock {\it Preprint}

\bibitem{ReedSimon}
M.~Reed and B.~Simon.
\newblock {\em Methods of Modern Mathematical Physics I: Functional Analysis.}
\newblock Academic Press, 1980.

\bibitem{SidBro}
N.D.~Sidiropoulos and R.~Bro.
\newblock {\it On the uniqueness of multilinear decomposition of N-way arrays,}
\newblock Journal of Chemometrics, {\bf 14} (2000), pp. 229-239.


\bibitem{Stegeman1}
A.~Stegeman, J.M.F.~Ten Berge and L.~De Lathauwer.
\newblock {\it Sufficient Conditions for Uniqueness in Candecomp/Parafac and Indscal with Random Component Matrices,}
\newblock Psychometrika {\bf 71} (2006), pp. 219-229.

\bibitem{Stegeman2}
A.~Stegeman.
\newblock {\it On Uniqueness of the nth order tensor decomposition into rank-1 terms with linear independence in one mode,}
\newblock SIMAX {\bf 31} (2010), pp. 2498-2516.


\bibitem{Stolz}
G.~Stolz.
\newblock {\it Anderson Localization via the Fractional Moments Method,} 
\newblock Lecture Notes at the Arizona School on Analysis and its Applications, March 15-19, 2010.

\bibitem{Strassen}
V.~Strassen.
\newblock {\it Gaussian Elimination is not Optimal,}
\newblock Numerical Mathematics {\bf 13} (1969), pp. 354-356.

\bibitem{Tucker1}
L.R.~Tucker. 
\newblock Implications of factor analysis of three-way matrices for measurement of change,
\newblock in {\it Problems in Measuring Change}, C. W. Harris, eds., University of Wisconsin Press, (1963) pp. 122-137. 

\bibitem{Tucker2}
L.R.~Tucker. 
\newblock The extension of factor analysis to three-dimensional matrices,
\newblock  in {\it Contributions to Mathematical Psychology}, H. Gulliksen and N. Frederiksen, eds., Holt, Rinehardt, \& Winston, New York, 1963.

\bibitem{Tucker3}
L.R.~Tucker. 
\newblock Some mathematical notes on three-mode factor analysis,
\newblock {\it Psychometrika}, {\bf 31} (1966), pp. 279-311.



\bibitem{Vasilescu}
M.A.O.~Vasilescu and D.Terzopoulos.
\newblock {\it Multilinear subspace analysis for image ensembles,}
\newblock in Proc. of IEEE Conference on Computer Vision and Pattern Recognition (CVPR 03), 2003.


\end{thebibliography}
\end{document}